\title{The continuity of  curvature flow 
in Fr\'echet distance 
}
\author{Shiyi Ma
        }
\date{}
\newtheorem{theorem}{Theorem}[section]
\newtheorem{lemma}{Lemma}[section]
\newcommand{\R}{{\mathbb R}}
\newcommand{\sph}{{\mathbf S}}
\DeclareMathOperator{\dist}{dist}
\DeclareMathOperator{\cf}{cf}
\DeclareMathOperator{\tr}{int}
\newcommand{\interior}[1]{%
  {\kern0pt#1}^{\mathrm{o}}%
}
\titlespacing{\section}{1pt}{*1}{*1}
\titlespacing{\subsection}{1pt}{*1}{*1}
\begin{document}
\maketitle

\begin{abstract}
We study the evolution of a Jordan curve on the plane by curvature flow, also known as curve shortening flow, and by level-set flow, which is a weak formulation of curvature flow.
We show that the evolution of the  curve depends continuously on the initial curve in Fr\'echet distance. This is an extension of Michael Dobbins'  work on the evolution of Jordan curves bisecting  the $2$-sphere  by curvature flow.

    \end{abstract}




\section{Introduction}
\noindent
Let $\mathfrak{J}$ denote the set of Jordan curves (which are also called simple closed curves) with zero Lebesgue measure, on $\R^2$. 
Michael Gage and Richard Hamilton proved that all smooth convex curves eventually contract to a point without forming any other singularities \cite{Gage}, and Matthew Grayson proved that every non-convex curve will eventually become convex \cite{Grayson}. 
Let $\mathfrak{J}_T$ be a subset of $\mathfrak{J}$ for which the curvature flow problem has a solution up to time $T$, for $ 0 < T < \infty$. 
Define a function $\cf: \mathfrak{J}_T \times [0, T)_{\R} \to \mathfrak{J}$ by evolution by curvature flow. The main result of this article is that this function is continuous in Fr\'echet distance.

\begin{theorem}\label{1.1}
    Let $\gamma_k \in \mathfrak{J}_T$, and $t_k \in [0, T)_{\R}$ for $k \in \{1, ..., \infty\}$. If $\gamma_k \to \gamma_{\infty}$ in Fr\'echet distance and $t_k \to t_{\infty}$, then $\cf(\gamma_k, t_k) \to \cf(\gamma_{\infty}, t_{\infty})$ in Fr\'echet distance. 
\end{theorem}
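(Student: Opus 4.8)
The plan is to pass through the level-set (weak) formulation of curvature flow, whose comparison principle and stability under perturbation of the initial data make continuity transparent, and then to translate the conclusion back into the Fréchet metric. To a Jordan curve $\gamma\in\mathfrak{J}$ enclosing the open region $\Omega_\gamma$, attach a bounded continuous function $u_0^\gamma$ on $\R^2$ -- for instance a truncation of the signed distance to $\gamma$, negative on $\Omega_\gamma$ -- so that $\{u_0^\gamma<0\}=\Omega_\gamma$ and $\{u_0^\gamma=0\}=\gamma$, and let $u^\gamma(\cdot,t)$ be the unique viscosity solution of the level-set equation $u_t=|\nabla u|\,\mathrm{div}\!\big(\nabla u/|\nabla u|\big)$ with datum $u_0^\gamma$, which exists for all $t\ge 0$ (Evans--Spruck; Chen--Giga--Goto). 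I would break Theorem~\ref{1.1} into four links: \emph{(i)} if $\gamma_k\to\gamma_\infty$ in Fréchet distance then $\|u_0^{\gamma_k}-u_0^{\gamma_\infty}\|_\infty\to 0$; \emph{(ii)} hence $\sup_{t\ge 0}\|u^{\gamma_k}(\cdot,t)-u^{\gamma_\infty}(\cdot,t)\|_\infty\to 0$, and since $(x,t)\mapsto u^{\gamma_\infty}(x,t)$ is continuous, $u^{\gamma_k}(\cdot,t_k)\to u^{\gamma_\infty}(\cdot,t_\infty)$ uniformly; \emph{(iii)} for $\gamma\in\mathfrak{J}_T$ and $t\in[0,T)$ the set $\{u^\gamma(\cdot,t)=0\}$ equals $\cf(\gamma,t)$, and $u^\gamma(\cdot,t)$ takes both signs in every neighbourhood of every point of it; \emph{(iv)} together, \emph{(ii)} and \emph{(iii)} give $\cf(\gamma_k,t_k)\to\cf(\gamma_\infty,t_\infty)$ in Hausdorff distance, which for Jordan curves upgrades to Fréchet distance.

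Link \emph{(ii)} is where the weak formulation pays off and is essentially free: the level-set equation is invariant under $u\mapsto u+c$, so $u^{\gamma_\infty}\pm\varepsilon$ are solutions, and the comparison principle applied to the ordered data $u_0^{\gamma_\infty}-\varepsilon_k\le u_0^{\gamma_k}\le u_0^{\gamma_\infty}+\varepsilon_k$, with $\varepsilon_k:=\|u_0^{\gamma_k}-u_0^{\gamma_\infty}\|_\infty$, yields $\|u^{\gamma_k}(\cdot,t)-u^{\gamma_\infty}(\cdot,t)\|_\infty\le\varepsilon_k$ for every $t$. Link \emph{(iii)} is the consistency of the level-set flow with the classical flow wherever the latter is smooth, together with the absence of fattening: for $\gamma\in\mathfrak{J}_T$ the classical solution $t\mapsto\cf(\gamma,t)$ is a smooth embedded curve on $(0,T)$, its signed distance function is smooth with nonvanishing gradient in a space-time neighbourhood of the moving curve and solves the level-set equation there, so uniqueness of viscosity solutions identifies $\{u^\gamma(\cdot,t)=0\}$ with $\cf(\gamma,t)$ and forces the stated sign change; at $t=0$ the identification is immediate, transversality of the signed distance to a Jordan curve is clear, and $\{u_0^\gamma=0\}=\gamma$ has empty interior precisely because curves in $\mathfrak{J}$ carry zero Lebesgue measure -- the one place that hypothesis is used.

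Links \emph{(i)} and \emph{(iv)} carry the geometric content, and they are the reason the statement is about Fréchet rather than merely Hausdorff distance. For \emph{(i)}: Fréchet-closeness of the Jordan curves forces Hausdorff-closeness of the curves \emph{and} convergence $\Omega_{\gamma_k}\to\Omega_{\gamma_\infty}$ of the enclosed regions with matching interior and exterior (the Fréchet matching makes $\gamma_k$ and $\gamma_\infty$ freely homotopic in $\R^2\setminus\{x\}$ for each $x$ away from $\gamma_\infty$, so winding numbers agree there), and $\|u_0^{\gamma_k}-u_0^{\gamma_\infty}\|_\infty$ is controlled uniformly by these two convergences. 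For \emph{(iv)}: the inclusion $\limsup_k\cf(\gamma_k,t_k)\subseteq\cf(\gamma_\infty,t_\infty)$ is immediate from \emph{(ii)}, while every point of $\cf(\gamma_\infty,t_\infty)$ is a limit of points of $\cf(\gamma_k,t_k)$ by the sign change in \emph{(iii)} (near such a point $u^{\gamma_\infty}(\cdot,t_\infty)$ takes both signs, hence so does $u^{\gamma_k}(\cdot,t_k)$ for large $k$, which therefore vanishes nearby); this yields Hausdorff convergence, and since the limit is a Jordan curve and the approximants eventually lie in a thin tube around it while winding around once, parametrizing by nearest-point projection promotes Hausdorff convergence to Fréchet convergence -- a planar-topology lemma I would isolate and prove once. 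The case $t_\infty=0$, where $\cf(\gamma_\infty,0)=\gamma_\infty$ need not be smooth, I would either absorb into that lemma or treat via the triangle inequality $d_{\mathcal{F}}(\cf(\gamma_k,t_k),\gamma_\infty)\le d_{\mathcal{F}}(\cf(\gamma_k,t_k),\gamma_k)+d_{\mathcal{F}}(\gamma_k,\gamma_\infty)$ together with a modulus of continuity of $\cf(\gamma,\cdot)$ at $t=0$ that is uniform for $\gamma$ near $\gamma_\infty$.

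I expect the main obstacle to be precisely this geometric dictionary and the initial-time regime: proving that convergence of Jordan curves in Fréchet distance is equivalent to Hausdorff convergence of the curves together with set-convergence of the enclosed regions, and controlling $\cf(\gamma_k,t_k)$ for $k$ large and $t_k$ small when $\gamma_\infty$ is only as regular as membership in $\mathfrak{J}_T$ allows -- the regime in which a direct classical-flow argument is least comfortable. The analytic ingredients (the comparison principle, stability under uniform perturbation of the data, agreement of the level-set flow with the classical flow while the latter is smooth, and no fattening) are by now classical; the real work is in bridging them to the Fréchet-metric statement and making the bridge hold uniformly up to and including $t=0$, where if necessary I would trap $\gamma_k$ between two fixed smooth barriers enclosing $\gamma_\infty$, the outer one automatically surviving to time $T$ by comparison with $\cf(\gamma_\infty,\cdot)$.
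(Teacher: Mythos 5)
There is a genuine gap, and it sits exactly at the crux of the theorem: the claimed upgrade from Hausdorff to Fr\'echet convergence in link \emph{(iv)}. Your links \emph{(i)}--\emph{(iii)} and the first half of \emph{(iv)} are a reasonable (and arguably cleaner) level-set route to the statement that $\cf(\gamma_k,t_k)\to\cf(\gamma_\infty,t_\infty)$ in \emph{Hausdorff} distance --- but that statement is already available as Lemma \ref{J haus} (Dobbins), and the entire content of Theorem \ref{1.1} beyond it is the passage to Fr\'echet distance. The ``planar-topology lemma'' you propose to isolate --- that a Jordan curve lying in a thin tube around a smooth Jordan curve and winding once around it is Fr\'echet-close to it --- is false. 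Take $\gamma_\infty$ the unit circle and $\gamma_k$ a simple closed curve in the annulus $1-\varepsilon<|x|<1+\varepsilon$ that runs from angle $0$ to angle $\pi$, doubles back to angle $\pi/2$ at a slightly different radius, and then proceeds to angle $2\pi$: it winds once, its nearest-point projection onto the circle has degree $1$, and its Hausdorff distance to the circle is at most $\varepsilon$, yet its Fr\'echet distance to the circle is bounded below by a constant of order $1$, because the matching parameterization of the circle is monotone and cannot follow the backtrack. Degree one of the projection does not make the projection monotone, and nearest-point parametrization does not produce a homeomorphism.

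Ruling out precisely this backtracking is what requires the dynamics, and it is where the paper does its real work: assuming Fr\'echet convergence fails, one extracts points $v_k,w_k,x_k,y_k,z_k$ on $\cf(\gamma_k,t_k)$ exhibiting the fold, constructs (Lemmas \ref{Existence} and \ref{hit z}) a curve $\eta$ meeting $\gamma_k$ in exactly $2$ points whose evolution passes through the fold point $x_\infty$, and derives a contradiction with Angenent's theorem that the number of intersection points of two evolving curves is non-increasing, since $\cf(\eta,t_k)$ would have to cross $\cf(\gamma_k,t_k)$ at least $4$ times. No purely static argument at time $t_\infty$ can replace this, so your plan is missing an essential idea rather than a technical detail. (Separately, your treatment of $t_\infty=0$ via a ``uniform modulus of continuity of $\cf(\gamma,\cdot)$ at $t=0$ in Fr\'echet distance'' is circular as stated --- that modulus is again an instance of the theorem --- but this is secondary to the main gap.)
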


\noindent
 The challenge of proving Theorem \ref{1.1} is to prove Lemma \ref{Existence} where given a curve $\gamma$, we need to show the existence of a curve that always intersects $\gamma$ at exactly two points until $\gamma$ shrinks to a point. To prove Lemma \ref{Existence},  we construct a family of Jordan curves through homotopy, and prove that  curves in this family instantly evolve into three classes, among which one  class contains  curves that intersect $\gamma$ at exactly two points throughout the lifetime of $\gamma$.

 \subsection{Definitions and notations}
 \noindent
We denote real intervals by $(a, b]_{\R} = \{x \in  R : a < x \leq b \} $ for bounds $ a, b \in \R $ with
any combination of round or square brackets for (half) open or closed intervals. We also similarly denote segments in $\R^2$ by $[a, b]_{\R^2}= \{ ta +(1-t)b: t \in [0, 1]_{\R}\}$ for $a, b \in \R^2$. 
We denote the unit circle in $\R^2$ by $\mathbf{S}^1$. Given a set $A \subset \R^2$, we denote the interior of $A$ by $\tr  A$. 
For $X \subseteq \R^2$ and $\delta > 0$, let $X \oplus \delta$ be the set of points at most distance $\delta$ from $X$. Every Jordan curve on $\R^2$ is treated as a subset of $\R^2$.
Note that for every Jordan curve $\gamma$, $\R^2 \setminus \gamma$ consists of two components. We denote 
the  bounded component, which is the interior of the region enclosed by $\gamma$ by $\mathcal{I}(\gamma)$,  and  the other  unbounded component by $\mathcal{O}(\gamma)$. 
 Let $\gamma_1, \gamma_2$ be two Jordan curves on $\R^2$. The Hausdorff distance is defined by $$\dist_H(\gamma_1, \gamma_2) = \inf\{ \delta \ | \ \gamma_1 \subseteq \gamma_2 \oplus \delta, \gamma_2 \subseteq \gamma_1 \oplus \delta\}.$$
The Fr\'echet distance on $\mathfrak{J}$ is defined by $$\dist_{F}(\gamma_1, \gamma_2) = \inf \limits_{\varphi_1, \varphi_2} \sup \limits_{x} ||\varphi_1(x) - \varphi_2(x)||,$$
where $\varphi_i: \sph^1 \to \gamma_i$ are homeomorphisms.
Let $\mathfrak{J}_{c^{\infty}}$ be the set of smooth Jordan curves. Given a smooth curve $\gamma \in \mathfrak{J}_{C^{\infty}}$ and a point $p \in \gamma$, let $\mathrm{kn} ( \gamma, p) \in \R^2$ denote the vector of  curvature of $\gamma$ at $p$. A solution to the curvature flow problem for a given initial curve $\gamma_0 \in \mathfrak{J}$ and stopping time $T \in (0, \infty)_{\R}$, is  a map $\Gamma : \sph^1 \times (0, T) \to \R^2$ such that $\partial_t \Gamma(x, t) = \mathrm{kn}(\Gamma(\sph^1, t) , \Gamma(x, t))$ and $\Gamma(\sph^1, t) \to \gamma_0$ in Fr\'echet distance as $t \to 0$. Let $$ \cf(\gamma_0, t)  = \Gamma(\sph^1, t),$$ where $\Gamma$ is the solution to the curvature flow problem starting from $\gamma_0$, provided that a unique solution exists.


\subsection{Level-set flow}
\noindent
Given an oriented curve $\gamma \in \mathfrak{J}$, let $\alpha_k \in \mathfrak{J}$ be a sequence of curves approaching $\gamma$ from one side of $\gamma$, and  $\beta_k \in \mathfrak{J}$ be a sequence of curves approaching $\gamma$ from the other  side of $\gamma$. Now, let $\{A_k\}$ be the sequence of nested annuli between $\alpha_k$
and $\beta_k$ and let $A_k(t)$ be the annulus between $\alpha_k(t)$ and $\beta_k(t)$, the time $t$ evolutions of $\alpha_k$ and $\beta_k$ by curvature flow.  The avoidance principle says that if two curves are initially disjoint then they remain disjoint throughout their evolution, for as long as a solution exists. Since $\gamma \subset A_k$ for all $k \in \{1,  ..., \infty\}$, the avoidance principle implies that for any $t>0$, $\cf(\gamma, t) \subset \bigcap \limits_{k=1}^{\infty} A_k(t)$. By  Lauer's Proposition 8.3 \cite{Lauer}, $\bigcap \limits_{k=1}^{\infty} A_k(t)$ is the level-set flow of $\gamma$. Lauer also showed that if the Lebesgue measure of the  initial curve is zero, then the level-set flow immediately becomes a smooth Jordan curve evolving by curvature flow \cite[Theorem 1.2]{Lauer}. 
Therefore, the level-set flow is the unique solution to the curvature flow problem in this case, which implies $\cf(\gamma, t) = \bigcap \limits_{k=1}^{\infty} A_k(t)$. 

\section{Previous results}
\noindent
In this section, we list out the lemmas about Jordan curves  bisecting the $2$-sphere, proved by Michael Dobbins \cite{Dobbins}. These lemmas also apply to Jordan curves in $\R^2$.

\begin{lemma}\label{J phi map}
For each  $\gamma \in \mathfrak{J}$ and $\epsilon > 0$, there is $\delta = \delta(\gamma, \epsilon) > 0$ such
that for every Jordan curve $\eta \subset (\gamma \oplus \delta)$, there is a continuous map  $\varphi : \eta \to \gamma$ such
that for all $x \in \eta, \ \  ||\varphi(x) - x|| < \epsilon$. 

\end{lemma}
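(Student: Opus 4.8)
The plan is to transport $\eta$ onto $\gamma$ by a partition of unity adapted to a sufficiently fine subdivision of $\gamma$, exploiting that a set trapped in a thin neighborhood of $\gamma$ cannot touch two subarcs of $\gamma$ that are far apart. (The hypothesis that $\eta$ is a Jordan curve is never really used: the same construction gives a continuous map $\varphi\colon X\to\gamma$ with $\|\varphi(x)-x\|<\epsilon$ for every $X\subseteq\gamma\oplus\delta$.)

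First I would fix a homeomorphism $\phi\colon\sph^1\to\gamma$ and, using uniform continuity of $\phi$, cyclically subdivide $\sph^1$ into $n\ge5$ subarcs so that the closed image arcs $a_1,\dots,a_n\subseteq\gamma$, taken in cyclic order with $a_i$ and $a_{i+1}$ meeting in a single point, each have diameter $<\epsilon/4$. (If instead $\operatorname{diam}\gamma<\epsilon$ one simply takes $\varphi$ to be a constant map to a point of $\gamma$ and $\delta<\epsilon-\operatorname{diam}\gamma$, so assume $\operatorname{diam}\gamma\ge\epsilon$, which makes such an $n$ available.) Since any two non-adjacent subarcs are disjoint compact sets, $m:=\min\{\dist(a_i,a_j):a_i,a_j\text{ non-adjacent}\}>0$, and I set $\delta:=\tfrac{1}{10}\min(\epsilon,m)$, which depends only on $\gamma$ and $\epsilon$.

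Now let $\eta\subseteq\gamma\oplus\delta$, and let $V_i$ be the open $2\delta$-neighborhood of $a_i$, so that $\{V_i\}$ is an open cover of $\gamma\oplus\delta$ and hence of $\eta$. The crucial observation is that for every $x\in\eta$ the index set $J(x):=\{i:x\in V_i\}$ consists of at most two consecutive indices: if $i,j\in J(x)$ were non-adjacent then $\dist(a_i,a_j)<4\delta<m$, a contradiction, and a subset of $\Z/n\Z$ with $n\ge5$ all of whose pairs are adjacent or equal has at most two, consecutive, elements. Choose a continuous partition of unity $\{\chi_i\}$ on $\eta$ subordinate to $\{V_i\}$, for instance $\chi_i=f_i/\sum_j f_j$ with $f_i(x)=\max(0,\,1-\dist(x,a_i)/(2\delta))$. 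Identify $\sph^1$ with $\R/n\Z$ so that $a_i=\phi([i-1,i])$, and define $\theta\colon\eta\to\sph^1$ by sending $x$ to the center of mass on the circle $\R/n\Z$ of the points $i-\tfrac12$ weighted by $\chi_i(x)$; this is unambiguous because, by the observation, the nonzero weights sit at at most two consecutive points, hence inside an arc of length $\le1<n/2$. Concretely, on a neighborhood of any $x$ with $J(x)\subseteq\{i_0,i_0+1\}$ one has $\theta=(i_0+\tfrac12)-\chi_{i_0}$ (there the remaining $\chi_j$ vanish, so $\chi_{i_0}+\chi_{i_0+1}=1$), and these local expressions patch to a continuous map. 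Put $\varphi:=\phi\circ\theta\colon\eta\to\gamma$.

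To bound the displacement, fix $x\in\eta$ and an index $i$ with $J(x)\subseteq\{i,i+1\}$. Then $\theta(x)$ lies in the arc $[i-1,i+1]\subseteq\R/n\Z$, so $\varphi(x)=\phi(\theta(x))\in a_i\cup a_{i+1}$, while $x\in V_i\cup V_{i+1}$ gives $\dist(x,a_i\cup a_{i+1})<2\delta$. Since $a_i$ and $a_{i+1}$ share a point and each has diameter $<\epsilon/4$, we have $\operatorname{diam}(a_i\cup a_{i+1})<\epsilon/2$, whence $\|\varphi(x)-x\|<2\delta+\epsilon/2<\tfrac{1}{5}\epsilon+\tfrac{1}{2}\epsilon<\epsilon$. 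I expect the only genuinely non-routine points to be the localization claim about $J(x)$ --- which is exactly where the hypothesis $\eta\subseteq\gamma\oplus\delta$ enters and which forces $\delta$ to be small relative to $m=m(\gamma,\epsilon)$ --- and the continuity of the patched map $\theta$; the displacement estimate is then immediate.
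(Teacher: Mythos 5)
Your construction is correct, and it is worth noting that the paper itself does not prove Lemma \ref{J phi map} at all: it defers to Dobbins's spherical version and only remarks that the argument carries over. So your partition-of-unity argument is a self-contained planar proof rather than a retelling of the paper's. The key points all check out: with $\delta\le m/10$ the triangle inequality gives $\dist(a_i,a_j)<4\delta<m$ whenever some $x$ lies in $V_i\cap V_j$ for non-adjacent $i,j$, so the index set $J(x)$ is indeed contained in two consecutive indices (here $n\ge 5$ is what rules out a third index adjacent to both); the cover property of the $V_i$, the positivity of $\sum_j f_j$ on $\gamma\oplus\delta$, and the displacement bound $2\delta+\epsilon/2<\epsilon$ are all fine, and you correctly observe that the Jordan-curve hypothesis on $\eta$ is never used, consistent with the paper's remark that $\varphi$ need not be injective.

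One small point should be tightened: your statement that $\theta=(i_0+\tfrac12)-\chi_{i_0}$ holds \emph{on a neighborhood} of any $x$ with $J(x)\subseteq\{i_0,i_0+1\}$ is not literally true when $J(x)$ is a single index, say $J(x)=\{i_0\}$, since points arbitrarily close to $x$ may enter $V_{i_0-1}$, where $\chi_{i_0-1}>0$ and that formula no longer computes the center of mass. This is easily repaired: either check continuity at such $x$ directly (both adjacent local formulas tend to $i_0-\tfrac12$ as $y\to x$), or, more cleanly, observe that the sets $C_i=\{x\in\eta: J(x)\subseteq\{i,i+1\}\}$ form a finite closed cover of $\eta$, that $\theta$ is given on $C_i$ by the continuous formula $(i+\tfrac12)-\chi_i$, and that the formulas agree on the overlaps $C_{i-1}\cap C_i$ (where $\chi_i\equiv 1$); the pasting lemma then gives continuity of $\theta$, hence of $\varphi=\phi\circ\theta$. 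With that adjustment the proof is complete.
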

\noindent
Note that $\varphi$ is not necessarily bijective, so this does not provide an upper bound on Fr\'echet
distance.

\begin{lemma}\label{J haus}
     Let $\gamma_k \in \mathfrak{J}_T$ and $t_k  \in [0, T)_{\R} $ for $ k \in  \{1, . . . ,\infty\}$. If $\gamma_k \to \gamma_{\infty}$
in Fr\'echet distance and $t_k \to t_{\infty}$, then $\cf(\gamma_k, t_k) \to \cf(\gamma_{\infty}, t_{\infty})$ in Hausdorff distance.
\end{lemma}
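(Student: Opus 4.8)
The plan is to sandwich each evolved curve $\cf(\gamma_k,t_k)$ between the curvature flows of two fixed smooth ``barrier'' curves that hug $\gamma_\infty$ from the inside and the outside, and to exploit the level-set description of \S1.2 — namely $\cf(\gamma_\infty,t)=\bigcap_j A_j(t)$ — to see that, as the barriers are chosen closer to $\gamma_\infty$, the annular region between their flows collapses onto $\cf(\gamma_\infty,t_\infty)$. Since Fréchet distance dominates Hausdorff distance, the hypothesis gives $\gamma_k\to\gamma_\infty$ in Hausdorff distance as well, and it suffices to show that for each $\epsilon>0$ there is $N$ with $\dist_H(\cf(\gamma_k,t_k),\cf(\gamma_\infty,t_\infty))<\epsilon$ for all $k\ge N$; fix such an $\epsilon$.

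First I would set up the barriers. Choose nested smooth Jordan curves $\alpha_j\subset\mathcal I(\gamma_\infty)$ and $\beta_j\subset\mathcal O(\gamma_\infty)$ with $\mathcal I(\alpha_j)\uparrow\mathcal I(\gamma_\infty)$ and $\overline{\mathcal I(\beta_j)}\downarrow\overline{\mathcal I(\gamma_\infty)}$, and let $A_j$ be the closed annulus between them, so $\bigcap_j A_j=\gamma_\infty$. Write $\alpha_j(t)=\cf(\alpha_j,t)$, $\beta_j(t)=\cf(\beta_j,t)$, and $A_j(t)$ for the annulus between these. The Gauss--Bonnet identity $\tfrac{d}{dt}\mathrm{Area}(\mathcal I(\cdot))=-2\pi$ shows a smooth curve becomes extinct at time $\mathrm{Area}$ of its interior divided by $2\pi$; since $\mathrm{Area}(\mathcal I(\alpha_j))\uparrow\mathrm{Area}(\mathcal I(\gamma_\infty))\ge 2\pi T>2\pi t_\infty$, I may restrict to $j$ large enough that $\alpha_j(t)$ (a fortiori $\beta_j(t)$) exists for $t\le t_\infty$. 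By \S1.2, $\cf(\gamma_\infty,t)=\bigcap_j A_j(t)$; the $A_j(t)$ are compact and, by the avoidance principle, nested decreasing, with $\cf(\gamma_\infty,t)\subseteq A_j(t)$. Hence there is $J$ with $A_J(t_\infty)\subseteq\cf(\gamma_\infty,t_\infty)\oplus\epsilon/4$. Moreover $\mathcal I(\alpha_J(t_\infty))\subseteq\mathcal I(\cf(\gamma_\infty,t_\infty))$ and $\mathrm{Area}(\mathcal I(\alpha_J(t_\infty)))=\mathrm{Area}(\mathcal I(\alpha_J))-2\pi t_\infty\to\mathrm{Area}(\mathcal I(\cf(\gamma_\infty,t_\infty)))$ as $J\to\infty$; combining this with the containment in a thin tube of the smooth curve $\cf(\gamma_\infty,t_\infty)$, a short argument placing small half-balls across $\cf(\gamma_\infty,t_\infty)$ gives, for $J$ large, $\cf(\gamma_\infty,t_\infty)\subseteq\alpha_J(t_\infty)\oplus\epsilon/4$ and likewise with $\beta_J$: the two boundary curves of $A_J(t_\infty)$ approximate $\cf(\gamma_\infty,t_\infty)$ from each side. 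Since $\alpha_J(\cdot),\beta_J(\cdot)$ are smooth in $t$ on $(0,T)$ and converge in Fréchet to $\alpha_J,\beta_J$ as $t\to 0$, they vary continuously in Hausdorff distance, so these closeness statements persist with $\epsilon/4$ replaced by $\epsilon/2$ for all $t$ in a neighbourhood of $t_\infty$, in particular for $t_k$ with $k$ large.

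Next I would trap $\gamma_k$. For $k$ large, $\gamma_k$ is Fréchet-close to $\gamma_\infty$, so there is a homeomorphism $\gamma_\infty\to\gamma_k$ of arbitrarily small displacement; a winding-number argument then shows every point of the compact set $\alpha_J$, which lies at positive distance from $\gamma_\infty$, has winding number $\pm 1$ about $\gamma_k$, so $\alpha_J\subset\mathcal I(\gamma_k)$, and symmetrically $\beta_J\subset\mathcal O(\gamma_k)$. By the avoidance principle $\alpha_J(t)\subset\mathcal I(\cf(\gamma_k,t))$ and $\beta_J(t)\subset\mathcal O(\cf(\gamma_k,t))$ for $t$ up to the extinction times, hence $\cf(\gamma_k,t_k)\subset A_J(t_k)$. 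Now I conclude: for $k$ large, $\cf(\gamma_k,t_k)\subseteq A_J(t_k)\subseteq\cf(\gamma_\infty,t_\infty)\oplus\epsilon$, which is one of the two containments; for the other, given $p\in\cf(\gamma_\infty,t_\infty)$ pick $a\in\alpha_J(t_k)$ and $b\in\beta_J(t_k)$ with $\|a-p\|,\|b-p\|<\epsilon/2$, so that $a\in\mathcal I(\cf(\gamma_k,t_k))$, $b\in\mathcal O(\cf(\gamma_k,t_k))$, and the segment $[a,b]_{\R^2}$, of length $<\epsilon$, meets $\cf(\gamma_k,t_k)$, producing a point of $\cf(\gamma_k,t_k)$ within $\epsilon$ of $p$. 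Thus $\dist_H(\cf(\gamma_k,t_k),\cf(\gamma_\infty,t_\infty))\le\epsilon$ for $k$ large. (The case $t_\infty=0$ is the same, reading $\cf(\cdot,0)$ as the curve itself and using the $t\to 0$ Fréchet convergence in the definition in place of smoothness of the flow at $t_\infty$.)

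I expect the first step to be the main obstacle: forcing $\cf(\gamma_\infty,t_\infty)$ to be squeezed tightly between the flows of the barriers — not merely that the annulus $A_J(t_\infty)$ shrinks to $\cf(\gamma_\infty,t_\infty)$ (this is exactly Lauer's level-set result, already at hand) but that its two boundary curves individually approximate $\cf(\gamma_\infty,t_\infty)$ from each side, which is what prevents the a priori possibly ``folded'' or collapsed curve $\cf(\gamma_k,t_k)$, though trapped in the thin annulus $A_J(t_k)$, from straying in Hausdorff distance. Controlling the enclosed areas via Gauss--Bonnet and using that the flows are instantly smooth is what makes this go through, and the trapping step is where Fréchet rather than merely Hausdorff convergence of $\gamma_k$ is genuinely needed.
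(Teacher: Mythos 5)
Your proposal is essentially correct, but it follows a somewhat different route from the one the paper indicates. The paper does not spell out a proof of this lemma: it defers to Dobbins's argument, remarking only that in the planar setting one encloses everything in a large closed disk $\mathbf D$ (kept invariant via the shrinking circle and the avoidance principle) so that compactness can be used to extract a convergent subsequence of the evolved curves, whose limit is then identified. You avoid any subsequence extraction and any auxiliary disk: you trap $\gamma_k$ between fixed smooth barriers $\alpha_J,\beta_J$, use the level-set identity $\cf(\gamma_\infty,t)=\bigcap_j A_j(t)$ from Section 1.2 to make the annulus thin at time $t_\infty$, show by an area argument that \emph{both} boundary flows individually hug $\cf(\gamma_\infty,t_\infty)$, and conclude with the segment-crossing separation argument; you also correctly identify the trapping step (winding number of $\gamma_k$ about points of $\alpha_J$) as the one place where Fr\'echet rather than merely Hausdorff convergence of $\gamma_k$ is genuinely needed. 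What each approach buys: the paper's route is short because it imports Dobbins wholesale and leans on compactness; yours is a self-contained, quantitative $\varepsilon$-sandwich resting only on the avoidance principle, Lauer's level-set identification, and the $2\pi$ area-decay law.

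Two glossed steps deserve a line each, though both are fillable within your setup. First, the identity $\mathrm{Area}(\mathcal I(\cf(\gamma_\infty,t_\infty)))=\mathrm{Area}(\mathcal I(\gamma_\infty))-2\pi t_\infty$ is not quoted anywhere in the paper; it follows by squeezing $\mathcal I(\cf(\gamma_\infty,t_\infty))$ between $\mathcal I(\alpha_j(t_\infty))$ and $\overline{\mathcal I(\beta_j(t_\infty))}$ and using that $\gamma_\infty$ has measure zero --- or you can bypass Gauss--Bonnet here entirely: the sets $A_j(t_\infty)$ are nested compacta decreasing to the measure-zero curve $\cf(\gamma_\infty,t_\infty)$, so continuity of Lebesgue measure gives $\mathrm{Area}(A_J(t_\infty))\to 0$, which is all your half-ball/connectedness argument needs, since $\mathcal I(\cf(\gamma_\infty,t_\infty))\setminus \mathcal I(\alpha_J(t_\infty))\subseteq A_J(t_\infty)$. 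Second, the persistence of the time-$t_\infty$ closeness statements at the nearby times $t_k$ uses Hausdorff continuity in $t$ of $\alpha_J(t)$, $\beta_J(t)$ and of the filled annulus $A_J(t)$; this is true (smoothness of the barrier flows for $t>0$, plus the definitional convergence as $t\to 0$ in the case $t_\infty=0$), but it should be stated explicitly, as it is what converts your estimates at time $t_\infty$ into estimates at the times $t_k$.
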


\begin{lemma}\label{p_T 1}
Let $\gamma_1, \gamma_2 \in \mathfrak{J}_S$  be a pair of curves  that intersect at finitely many points, and let $p_T \in (\cf(\gamma_1, T) \cap \cf(\gamma_2, T))$ be a point of intersection at time $T \in (0, S)_{\R}$. Then, there is a continuous trajectory $p: [0, T]_{\R} \to \R^2$ such that for all $t \in [0, T]_{\R}$, we have $p(t) \in (\cf(\gamma_1, T) \cap \cf(\gamma_2, T)) $ and $p(T) = p_T$.  
\end{lemma}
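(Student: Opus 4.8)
The plan is to follow $p_T$ backwards in time along the space--time set of intersection points of the two flows, and then to handle the endpoint $t=0$ by a compactness argument. For $t\in(0,T]$ both $\cf(\gamma_1,t)$ and $\cf(\gamma_2,t)$ are smooth Jordan curves evolving by curvature flow \cite{Lauer}; discarding the trivial case $\gamma_1=\gamma_2$, I may assume they are distinct at every such $t$, so one may form
\[
\mathcal{P}=\bigl\{(q,t)\in\R^2\times(0,T]\ :\ q\in\cf(\gamma_1,t)\cap\cf(\gamma_2,t)\bigr\},
\]
which by Hausdorff continuity of $\cf$ (Lemma~\ref{J haus}) is closed in $\R^2\times(0,T]$ and has compact intersection with each slab $\R^2\times[\varepsilon,T]$. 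The structural input is the classical Sturmian intersection theory for curve shortening flow (going back to Angenent, and at bottom to the maximum principle for the parabolic equation governing the difference of two solutions written as local graphs): for each $t>0$ the set $\cf(\gamma_1,t)\cap\cf(\gamma_2,t)$ is finite, its cardinality is non-increasing in $t$ and changes only at the isolated times at which the two curves are tangent, near a transversal intersection $\mathcal{P}$ is the graph of a smooth function of $t$ by the implicit function theorem, and near a tangency point $\mathcal{P}$ is a finite union of arcs from which at least one branch continues to strictly smaller $t$. In particular no arc of $\mathcal{P}$ has $t$ locally constant, and along any branch one can always move to strictly smaller $t$.

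Next I would trace $p_T$ backwards. Let $C$ be the connected component of $(p_T,T)$ in $\mathcal{P}$ and let $I\subseteq(0,T]$ be the set of time levels met by $C$; since $C$ is connected and the time projection is continuous, $I$ is an interval containing $T$. If $\inf I=a>0$, then, because connected components are closed and $\mathcal{P}$ is closed in $\R^2\times(0,T]$, a subsequential limit of points of $C$ at times decreasing to $a$ (extracted using the compactness of $\mathcal{P}\cap(\R^2\times[a,T])$) produces a point $(q_0,a)\in C$; applying the implicit function theorem at $(q_0,a)$ when the intersection there is transversal, and the local description of $\mathcal{P}$ at a tangency otherwise, yields points of $C$ at times below $a$, contradicting $a=\min I$. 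Hence $\inf I=0$, so $C$ meets every level $t\in(0,T]$. Using that the tangency times accumulate only at $0$, I would then build the trajectory by concatenation: on each interval between consecutive tangency times the intersections are transversal and $\mathcal{P}$ over that interval is a disjoint union of graphs of smooth functions of $t$; starting from the graph through $(p_T,T)$ and, at each tangency time encountered, passing to a descending branch supplied by the structure theorem, one obtains a continuous map $p:(0,T]\to\R^2$ with $p(T)=p_T$ and $p(t)\in\cf(\gamma_1,t)\cap\cf(\gamma_2,t)$ for all $t\in(0,T]$.

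To define $p(0)$: the image $p\bigl((0,T]\bigr)$ is bounded, and as $t\to0^+$ we have $\cf(\gamma_i,t)\to\gamma_i$ in Hausdorff distance (Lemma~\ref{J haus}), so every subsequential limit of $p(t)$ as $t\to0^+$ lies in $\gamma_1\cap\gamma_2$. The set of all such subsequential limits equals $\bigcap_{\delta>0}\overline{p\bigl((0,\delta]\bigr)}$, a decreasing intersection of nonempty compact connected sets, hence itself nonempty, compact and connected, and it is contained in the finite set $\gamma_1\cap\gamma_2$. A connected subset of a finite set is a single point, so $\lim_{t\to0^+}p(t)$ exists; setting $p(0)$ equal to this limit yields the required continuous trajectory $p:[0,T]_{\R}\to\R^2$, which lies in $\cf(\gamma_1,t)\cap\cf(\gamma_2,t)$ for all $t\in[0,T]_{\R}$ and satisfies $p(T)=p_T$ (here $\cf(\gamma_i,0)$ is read as $\gamma_i$).

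I expect the main obstacle to be the behaviour at the tangency times: ruling out that the backward trajectory reaches a time at which the two curves become tangent and cannot be continued to earlier times. This is exactly the point at which the monotonicity of the intersection count and the fine local structure of the zero set of the parabolic difference equation along the two flows are used, and it is the technical heart of the argument; the remainder is point-set topology together with the Hausdorff continuity of $\cf$ and the finiteness hypothesis $\#(\gamma_1\cap\gamma_2)<\infty$.
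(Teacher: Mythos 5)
Your argument is essentially the proof the paper intends: the paper defers to Dobbins's argument, which likewise traces the intersection point backward in time using Angenent's theorem (finitely many intersections for $t>0$, non-increasing count, discrete tangency times), the persistence of transversal intersections via a local graph/implicit-function argument, and the finiteness of $\gamma_1\cap\gamma_2$ to pin down the limit at $t\to 0$. Your reconstruction, including the maximum-principle fact that zeros of the parabolic difference cannot be created going forward (so branches continue to smaller $t$ through tangencies) and the connectedness argument at $t=0$, matches that approach, so no substantive difference to report.
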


\begin{lemma}\label{p_T 2}
Let $\gamma, \eta \in \mathfrak{J}$ be a pair of curves that intersect at only 2 points, and let $x$ be one of the points of intersection.  By  the time  the two points of intersection merge, there is a unique continuous trajectory $p(\gamma, \eta, x): [0, \infty) \to \R^2$ such that for all $t$ we have $p(\gamma, \eta, x; t) \in (\cf(\gamma, t) \cap \cf(\eta, t))$ and $p(\gamma, \eta, x; 0) = x$. Furthermore, $p(\gamma, \eta, x, t)$ is continuous as a function of $\gamma$ and $\eta$ in Fr\'echet distance and $x$ and $t$. 
\end{lemma}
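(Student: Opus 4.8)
The plan is to bootstrap from Lemma \ref{p_T 1}, which gives, for curves meeting at finitely many points, a continuous backward-in-time trajectory of intersection points terminating at a prescribed point. First I would reduce to the case that $\gamma$ and $\eta$ intersect transversally (or at least at finitely many points): since they meet at exactly $2$ points, one can perturb slightly and use Lemma \ref{J haus} together with the avoidance principle to argue that the intersection is generically finite; then apply Lemma \ref{p_T 1} on each interval $[0, T]$ with $T$ ranging over the lifetime, and glue. The key point for existence on all of $[0, \infty)$ (up to the merge time) is that by Sturmian/intersection-number decay for curvature flow, the number of intersection points of two curves evolving by curvature flow is non-increasing in time; starting from $2$ points, at every time before they merge there are exactly $2$ transverse intersection points (they cannot drop to $0$ without first merging, and cannot increase), so the trajectory $p(t)$ is locally the unique continuous selection of one of the two crossing points, and it extends uniquely as long as the two points stay distinct.

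For \emph{uniqueness} of the trajectory through $x$, I would use that at each time $t$ before the merge there are exactly two intersection points, depending continuously on $t$ (this continuity is itself a consequence of Lemma \ref{p_T 1} applied forward and backward from $t$, or of the implicit function theorem at transverse crossings together with a separate argument handling the non-transverse instants, which form a discrete set). Once the two intersection points are two continuous, non-colliding functions of $t$ on $[0, \tau)$, a continuous trajectory starting at $x$ must agree with the branch through $x$ at $t=0$ and, by continuity and the fact that the two branches never meet before $\tau$, cannot jump to the other branch; hence it is unique.

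For joint continuity in $(\gamma, \eta, x, t)$ in Fr\'echet distance, I would argue as follows. Fix a limit point $(\gamma_\infty, \eta_\infty, x_\infty, t_\infty)$ and a sequence $(\gamma_k, \eta_k, x_k, t_k)$ converging to it. By Lemma \ref{J haus}, $\cf(\gamma_k, s) \to \cf(\gamma_\infty, s)$ and $\cf(\eta_k, s) \to \cf(\eta_\infty, s)$ in Hausdorff distance, uniformly for $s$ in compact subsets of the common lifetime. The trajectories $p_k(\cdot) := p(\gamma_k, \eta_k, x_k; \cdot)$ take values in $\cf(\gamma_k, \cdot) \cap \cf(\eta_k, \cdot)$, which is uniformly bounded; I would extract a subsequential limit (Arzelà--Ascoli — equicontinuity coming from the uniform curvature bounds, equivalently from the parabolic smoothing in \cite[Theorem 1.2]{Lauer}, on compact time subintervals bounded away from the merge time). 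The limit trajectory is continuous, lies in $\cf(\gamma_\infty, \cdot) \cap \cf(\eta_\infty, \cdot)$ (a closed condition under Hausdorff convergence), and passes through $x_\infty$ at time $0$; by the uniqueness just established it must equal $p(\gamma_\infty, \eta_\infty, x_\infty; \cdot)$, so the whole sequence converges, giving continuity, and in particular continuity at the fixed time $t_\infty$.

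The main obstacle I expect is controlling behavior near the merge time and near non-transverse intersection instants: I need to know that the two intersection points remain \emph{exactly two} and vary continuously right up until they coincide, which requires the Sturmian intersection-count decay for curvature flow together with a compactness/smoothing input to rule out the count momentarily spiking or the points oscillating; and I need uniform (in $k$) control so that the merge times $\tau_k$ do not collapse, which again leans on Lemma \ref{J haus} and the avoidance principle to compare $\gamma_k, \eta_k$ with fixed barrier curves.
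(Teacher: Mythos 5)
A point of comparison first: the paper does not actually reprove Lemma \ref{p_T 2}. It is listed among the previous results and imported from Dobbins \cite{Dobbins}, the only indicated changes being that, in place of the compactness of the $2$-sphere, one encloses everything in a fixed closed disk $\mathbf{D}$ (using the avoidance principle against the shrinking circle $\cf(\partial\mathbf{D},t)$) so that the subsequence-extraction steps still work in $\R^2$, and that geodesic curvature is replaced by curvature. Your skeleton --- persistence of transversal crossings, Angenent's non-increasing intersection count with tangency times discrete, hence two continuous non-colliding branches giving existence and uniqueness of the trajectory through $x$, and a compactness-plus-uniqueness argument for continuity in $(\gamma,\eta,x,t)$ --- is in the same spirit as that argument, so the architecture is reasonable.

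There are, however, two genuine soft spots as written. First, the Arzel\`a--Ascoli step: equicontinuity of the trajectories $p_k$ does not follow from Lemma \ref{J haus} (which gives only Hausdorff convergence of the flows) together with ``uniform curvature bounds.'' The speed of an intersection point of two curves moving by curvature is of order $(|\kappa_1|+|\kappa_2|)/\sin\theta$, where $\theta$ is the intersection angle, so you need a uniform-in-$k$ lower bound on the angle, i.e. $C^1$ rather than Hausdorff closeness of the evolved curves; and near $t=0$ no smoothing-based estimate is available at all, since the initial data are merely Jordan curves. This is precisely where the paper's stated adaptation takes a different (and cheaper) route: the points $p_k(t_k)$ lie in the fixed compact disk $\mathbf{D}$, one extracts a convergent subsequence of points, not of trajectories, and then identifies the limit; no equicontinuity is needed. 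Second, the initial instant. Angenent's theorem concerns smooth flows at positive times; it does not by itself rule out that the two intersection points vanish instantly at $t=0^+$ (a drop from $2$ to $0$ ``without merging'' at time zero), nor give continuity of the trajectory at $t=0$, nor --- in your limiting argument --- guarantee that the subsequential limit trajectory starts at $x_\infty$ rather than at the other intersection point $y_\infty$. For that last identification one needs something like: for $k$ large and $t$ small, $\cf(\gamma_k,t)\cap\cf(\eta_k,t)$ is contained in two disjoint small balls around $x_\infty$ and $y_\infty$ (upper semicontinuity of intersections of compact sets under Hausdorff convergence), so a continuous trajectory starting near $x_\infty$ cannot jump to the other ball. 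The same issue affects your proposed gluing of the backward trajectories from Lemma \ref{p_T 1}: one must know that, for each $T$ before the merge, one of the two intersection points at time $T$ traces back to $x$ and not both to the other point. Until these two points are filled in, the proposal falls short of the argument the paper relies on.
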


\noindent
The proofs of these lemmas are analogous to Dobbins's proofs. One difference is that in Lemma \ref{J haus} and \ref{p_T 2}, we need to define a closed circular region $\mathbf{D}$  that contains $\gamma_{\infty}$, because we need to use the compactness of $\mathbf{D}$ to show that some subsequence is convergent. The  arguments we need  for the proofs are as follows:

Suppose the diameter of $\gamma_{\infty}$ is $d$.  Let $\mathbf{D} \in \R^2$ be a closed circular region with the diameter $d+1$ such that $\gamma_{\infty} \subset \mathbf{D}$. Since  $\gamma_{\infty}$ and $ \partial \mathbf{D}$ are disjoint and $ \partial \mathbf{D}$ encloses $\gamma_{\infty}$, by the avoidance principle, the shrinking circle $\cf(\partial \mathbf{D}, t)$ encloses $\cf(\gamma_{\infty}, t)$ for all $t \in [0, T]_{\R}$. Therefore, $\cf(\gamma_{\infty}, t) \subset \mathbf{D}$ for all $t \in [0, T]_{\R}$.

Dobbins proved these two lemmas for Jordan curves on the $2$-sphere by using the compactness of the $2$-sphere. Another difference is that for Lemma \ref{p_T 1}, we need to change geodesic curvature in Dobbins's proof to curvature in the argument that the transversal intersection point of two curves evolving by curvature flow continues to be a point of intersection provided that the intersection remains transversal.   

\section{Proof of continuity}

\begin{lemma} \label{common arc}
Let $\gamma_1, \gamma_2 \in \mathfrak{J}$  be a pair of distinct curves that have a common arc $\mathcal{A}$, i.e. $\mathcal{A} = \gamma_1 \cap \gamma_2$.
Then  for any $t> 0$, 
$\cf(\gamma_1, t)$ and  $\cf(\gamma_2, t)$ are disjoint and if 
$\gamma_1 \setminus \mathcal{A} \subset \mathcal{I}(\gamma_2)$ $(\text{or} \ \gamma_2  \setminus \mathcal{A} \subset \mathcal{I}(\gamma_1))$, then 
$\cf(\gamma_1, t) \subset \mathcal{I}(\cf(\gamma_2, t))$  $( \text{or} \ \cf(\gamma_2, t) \subset \mathcal{I}(\cf(\gamma_1, t)))$,   until one of them shrinks to a point.

\end{lemma}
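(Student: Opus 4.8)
The plan is to approximate the common-arc configuration by genuinely disjoint curves and then invoke the avoidance principle together with the Hausdorff-continuity already established in Lemma \ref{J haus}. Since $\gamma_1$ and $\gamma_2$ agree exactly on $\mathcal{A}$ and are otherwise separated (with, say, $\gamma_1 \setminus \mathcal{A} \subset \mathcal{I}(\gamma_2)$), I would first construct, for each small $\delta > 0$, a Jordan curve $\eta_\delta$ that is disjoint from $\gamma_2$, lies in $\mathcal{I}(\gamma_2)$, contains $\gamma_1 \setminus \mathcal{A}$ in a neighborhood sense, and satisfies $\dist_F(\eta_\delta, \gamma_1) < \delta$: concretely, push the arc $\mathcal{A}$ of $\gamma_1$ slightly into $\mathcal{I}(\gamma_2)$ while keeping the rest of $\gamma_1$ fixed, smoothing at the two endpoints of $\mathcal{A}$. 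Symmetrically build $\zeta_\delta \supset$-side of $\gamma_2$, disjoint from $\gamma_1$, with $\dist_F(\zeta_\delta, \gamma_2) < \delta$, and arrange $\eta_\delta \subset \mathcal{I}(\zeta_\delta)$. Because these are smooth (or at least zero-measure) Jordan curves that are pairwise disjoint and nested, the avoidance principle gives $\cf(\eta_\delta, t) \subset \mathcal{I}(\cf(\zeta_\delta, t))$, hence $\cf(\eta_\delta,t)$ and $\cf(\zeta_\delta,t)$ are disjoint, for all $t$ up to the first extinction.

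Next I would pass to the limit $\delta \to 0$. By Lemma \ref{J haus}, $\cf(\eta_\delta, t) \to \cf(\gamma_1, t)$ and $\cf(\zeta_\delta, t) \to \cf(\gamma_2, t)$ in Hausdorff distance as $\delta \to 0$, for each fixed $t \in (0, S)$ with $S$ less than both extinction times. The subtlety is that Hausdorff convergence alone only yields $\cf(\gamma_1,t) \subseteq \overline{\mathcal{I}(\cf(\gamma_2,t))}$, i.e. the two evolved curves could a priori touch. To upgrade this to genuine disjointness and strict inclusion, I would use the strong maximum principle / strict avoidance for curvature flow: two curves evolving by curvature flow that are initially disjoint remain at strictly positive distance for every $t > 0$ in their common interval of existence — more precisely, since $\gamma_1$ and $\gamma_2$ share the arc $\mathcal{A}$, and by Lauer's theorem the level-set flow of each becomes instantly a smooth curve evolving by curvature flow, I would instead compare $\cf(\gamma_1, t)$ and $\cf(\gamma_2, t)$ directly for $t$ slightly positive: at such a time both are smooth, and they cannot share an arc (a smooth curvature flow is determined by any open arc together with its evolution, so a shared arc at time $t_0>0$ would force the two flows to coincide, contradicting that they differ at time $0$ by continuity). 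Once I know that for some small $t_0 > 0$ the curves $\cf(\gamma_1, t_0)$ and $\cf(\gamma_2, t_0)$ are either disjoint or cross transversally at finitely many points, Lemma \ref{p_T 1} and the avoidance structure rule out crossings (a transversal crossing would have to have existed at time $0^+$, but near $t=0$ the two curves converge in Fréchet distance to $\gamma_1,\gamma_2$ which meet only along $\mathcal{A}$, not transversally), leaving disjointness; then the avoidance principle applied from time $t_0$ onward preserves both disjointness and the nesting $\cf(\gamma_1,t) \subset \mathcal{I}(\cf(\gamma_2,t))$ for all $t \ge t_0$, and letting $t_0 \to 0$ finishes the claim for all $t > 0$.

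For the inclusion statement specifically, once disjointness of $\cf(\gamma_1,t)$ and $\cf(\gamma_2,t)$ is known, the component structure follows by continuity: the function $t \mapsto \mathbf{1}[\cf(\gamma_1,t) \subset \mathcal{I}(\cf(\gamma_2,t))]$ is locally constant on the open set of times where the two curves are disjoint (it can only change when they intersect), and it equals $1$ for $t$ near $0$ because $\gamma_1 \setminus \mathcal{A} \subset \mathcal{I}(\gamma_2)$ together with the approximation $\eta_\delta \subset \mathcal{I}(\zeta_\delta)$ and Hausdorff convergence force $\cf(\gamma_1, t) \subset \overline{\mathcal{I}(\cf(\gamma_2,t))}$, which combined with disjointness gives $\cf(\gamma_1,t) \subset \mathcal{I}(\cf(\gamma_2,t))$.

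I expect the main obstacle to be the step that promotes Hausdorff closeness to strict separation — ruling out that the two evolved curves remain tangent. The cleanest route is the uniqueness/real-analyticity feature of smooth curvature flow: if two smooth curvature flows agree on an arc at a positive time they agree identically, and by the Fréchet continuity of $\cf$ near $t = 0$ (which is exactly Lemma \ref{J haus} phrased for the approximants, plus Lauer's instant smoothing) they cannot agree identically since $\gamma_1 \ne \gamma_2$. Making the "finitely many intersection points, and any intersection at time $t>0$ persists backward to $t=0$" argument precise — so as to exclude both tangencies and transversal crossings — is where I would spend the most care, leaning on Lemma \ref{p_T 1} for the backward-in-time trajectory of any intersection point.
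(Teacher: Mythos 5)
Your proposal correctly identifies the crux and gets partway there by a route that differs from the paper's: your one-sided approximation ($\eta_\delta$ disjoint from $\gamma_2$, nested inside it, converging to $\gamma_1$) plus Lemma \ref{J haus} and the avoidance principle does yield $\cf(\gamma_1,t)\subseteq\overline{\mathcal{I}(\cf(\gamma_2,t))}$, which already excludes transversal crossings and, once disjointness is known, gives the strict inclusion cleanly (the paper instead excludes transversal crossings by a local argument: two disjoint approximating sequences $\alpha_k\subset\mathcal{I}(\gamma_1)$, $\beta_k\subset\mathcal{O}(\gamma_2)$ are forced by Hausdorff convergence into the four-arc neighborhood of a transversal crossing point and hence must intersect, violating avoidance). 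However, there is a genuine gap exactly where you flagged it: ruling out tangential contact at positive times. Your unique-continuation argument only covers the case of a shared \emph{arc}, and even there it silently relies on spatial analyticity plus a backward-uniqueness step ("they would coincide, contradicting that they differ at time $0$") that is neither proved nor cited; isolated tangency points are never excluded. The sentence beginning "Once I know that for some small $t_0>0$ the curves are either disjoint or cross transversally at finitely many points" assumes away precisely the remaining case.

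Your fallback via Lemma \ref{p_T 1} does not close this. That lemma is stated for initial curves intersecting at \emph{finitely many} points, whereas $\gamma_1\cap\gamma_2=\mathcal{A}$ is an arc, so its hypotheses fail; and even granting a backward trajectory of intersection points, it would simply terminate on $\mathcal{A}$ at $t=0$, which is consistent and yields no contradiction — the transversal/tangential character of an intersection is not propagated backward, and Fr\'echet closeness to curves meeting along an arc does not preclude crossings. The paper's proof supplies the missing ingredient from Angenent's theorem \cite{Sigurd}: for $t>0$ the number of intersection points is instantly finite (so no common arc survives, with no need for analyticity or backward uniqueness), and the set of times at which a tangency occurs is discrete. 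Hence just before any putative tangency time $t_0$ the curves have no tangential intersections, and since transversal intersections are already excluded they are disjoint on $(t_0-\delta,t_0)$; the avoidance principle then keeps them disjoint at $t_0$, a contradiction. To repair your proof you would need to import this discreteness-of-tangency-times statement (or an equivalent strict-separation/strong-maximum-principle result for the smooth flows at positive times) and run exactly this kind of argument; without it, the step from $\cf(\gamma_1,t)\subseteq\overline{\mathcal{I}(\cf(\gamma_2,t))}$ to disjointness is unsupported.
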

\begin{proof}


\begin{figure}[!htbp]%
    \centering
    \includegraphics[width=13cm]{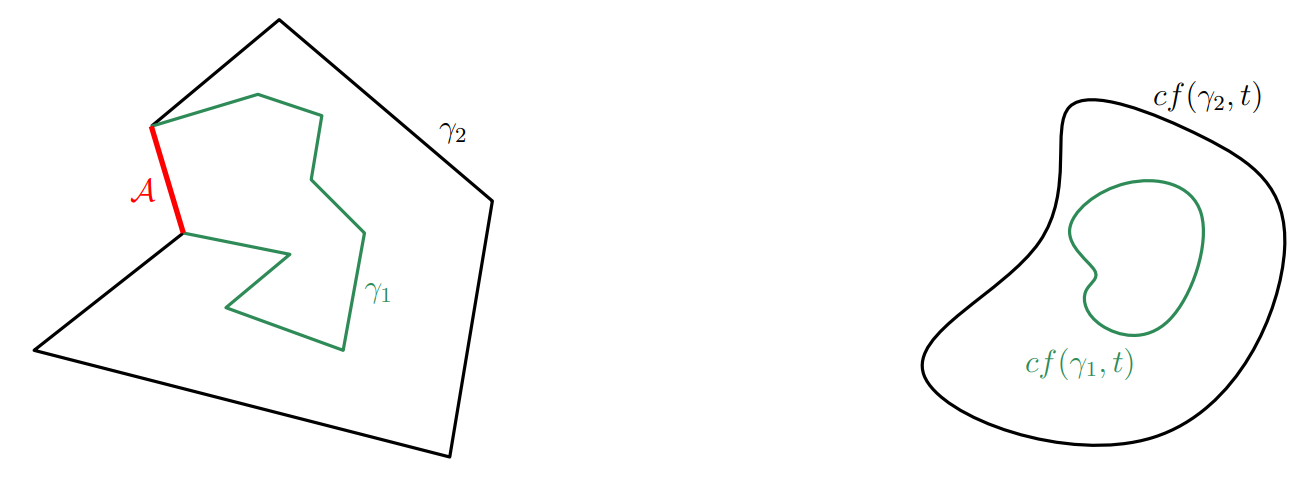}%
    \caption{$\gamma_1$ and $\gamma_2$ immediately become disjoint, and $\cf(\gamma_1, t)$ remains in the region enclosed by $\cf(\gamma_2, t)$ up to some finite time $T$. }%
    \label{2}%
\end{figure}

 Without loss of generality, we assume that $\gamma_1  \subset \mathcal{I}(\gamma_2)$; see Figure \ref{2}. 
   Let $T$ be the first time when one of the curves shrinks to a point as they evolve by curvature flow. 
   First, we want to show that for $t \in (0, T)_{\R}$,  there is no transversal intersection points of $\cf(\gamma_1, t)$ and $\cf(\gamma_2, t)$.    Suppose not. Let $P$ be  a transversal intersection point of  $ \cf(\gamma_1, t_{\infty}) $ and $ \cf(\gamma_2, t_{\infty})$.  Let $\{\alpha_k\}$ be a sequence of Jordan curves such that $\alpha_k \subset \mathcal{I}(\gamma_1)$ and  $\alpha_k \to \gamma_1$ in Fr\'echet distance. Let $\{\beta_k\}$ be a sequence of Jordan curves  such that  $\beta_k \subset \mathcal{O}(\gamma_2)$ and $\beta_k \to \gamma_2$ in Fr\'echet distance.  Then all $\alpha_k$'s are disjoint from all $\beta_k$'s. Let $t_k \to t_{\infty}$ for $t_{\infty} < T$. 
    Since $\cf(\gamma_i, t_{\infty})$ for $i = 1, 2$ are smooth, by the implicit function theorem,  there exist  $\varepsilon_1, \varepsilon_2$ such that 
the   component of $(\cf(\gamma_1, t_{\infty}) \oplus \varepsilon_1 )\cap (\cf(\gamma_2, t_{\infty}) \oplus \varepsilon_2) $ with $P$ in it is a region that has a boundary consisting of  4 arcs; 
   see Figure \ref{3}. 
By Lemma \ref{J haus},   $\cf(\alpha_k , t_k) \to \cf(\gamma_1, t_{\infty})$ and $\cf(\beta_k , t_k) \to \cf(\gamma_2, t_{\infty})$ in Hausdorff distance, so  $\cf(\alpha_k, t_{k}) \subset \cf(\gamma_1, t_{\infty}) \oplus \varepsilon_1$ and $\cf(\beta_k, t_{k}) \subset  \cf(\gamma_2, t_{\infty}) \oplus \varepsilon_2$ for $k$ sufficiently large. 
   Consequently,  it is inevitable for $\cf(\alpha_k, t_k)$ and   $\cf(\beta_k, t_k)$   to intersect with each other, 
 which contradicts the avoidance principle. Therefore, for $t \in (0, T)_{\R}$,  there is no transversal intersection points of $\cf(\gamma_1, t)$ and $\cf(\gamma_2, t)$. 
    \par

Sigurd Angenent showed that the number of intersection points of two different curves immediately becomes finite and then is non-increasing throughout their evolution \cite[Theorem 1.3]{Sigurd}. The intersection is called a tangential intersection point of two curves if the unit tangent vectors are dependent at that point of intersection. If two curves initially have a common arc, then they have infinitely many tangential intersection points. By Angenent's theorem, the number of intersection points  of $\cf(\gamma_1, t)$ and $\cf(\gamma_2, t)$ is finite for $t \in (0, T)_{\R}$, and thus $\cf(\gamma_1, t)$ and $\cf(\gamma_2, t)$ does not share a common arc for  $t \in (0, T)_{\R}$.

 Now we want to show that for $t \in (0, T)_{\R}$,  there is no tangential intersection points of $\cf(\gamma_1, t)$ and $\cf(\gamma_2, t)$.   Suppose not. Then there is a tangential intersection point of $\cf(\gamma_1, t_0)$ and $\cf(\gamma_2, t_0)$ for some $t_0 < T$. By Angenent's theorem, which  says that the set of moments in time $t$ at which the evolution of two curves have  a tangential intersection points is discrete in $(0, T)_{\R}$\cite[Theorem 1.3]{Sigurd}, there exists $\delta > 0$ such that $\cf(\gamma_1, t)$ and $\cf(\gamma_2, t)$ are not tangent when $t \in (t_0 -\delta, t_0)_{\R} \cup (t_0, t_0+\delta)_{\R}$. Then $\cf(\gamma_1, t)$ and $\cf(\gamma_2, t)$     would either have transversal intersection points or be disjoint 
 for $t \in (t_0 -\delta, t_0)_{\R} \cup (t_0, t_0+\delta)_{\R}$. 
  Since we proved that   it is impossible for them to have transversal intersection points for $t \in (0, T)_{\R}$,  $\cf(\gamma_1, t)$ and $\cf(\gamma_2, t)$ are disjoint for  $t \in (t_0 -\delta, t_0)_{\R} \cup (t_0, t_0+\delta)_{\R}$. However, there is a contradiction because  by the avoidance principle, if $\cf(\gamma_1, t)$ and $\cf(\gamma_2, t)$ are disjoint when $t < t_0$, then they remain disjoint when $t = t_0$. Therefore, $\cf(\gamma_1, t)$ and $\cf(\gamma_2, t)$ are disjoint for $t \in (0, T)_{\R}$.

\begin{figure}%
    \centering
    \includegraphics[width=9cm]{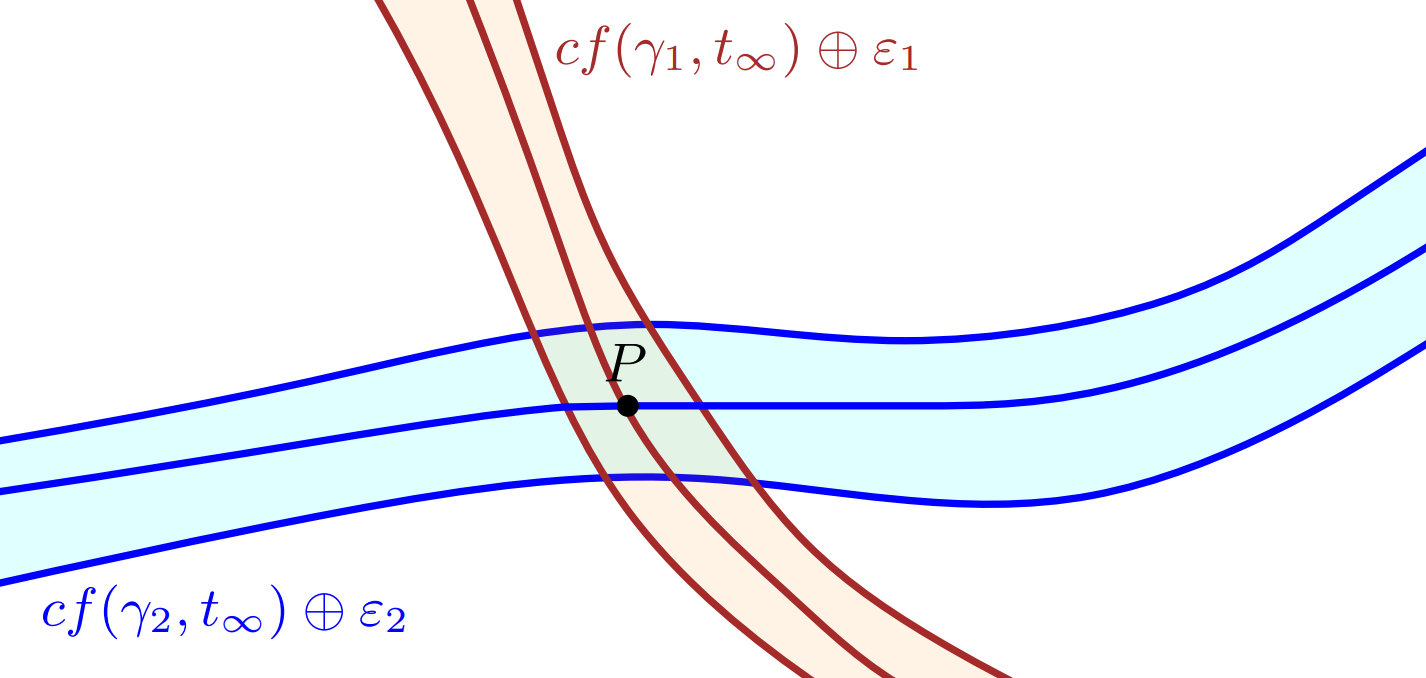}%
    \caption{Since $\cf(\gamma_1, t_{\infty})$ and $\cf(\gamma_2, t_{\infty})$ intersect transversely, there exist $\epsilon_1$ and $\epsilon_2$ such that  $(\cf(\gamma_1, t_{\infty}) \oplus \varepsilon_1) \cap (\cf(\gamma_2, t_{\infty}) \oplus \varepsilon_2)$ is a region that contains $P$ and has a boundary consisting of $4$ arcs.  }%
    \label{3}%
\end{figure}

    \par

    Now we want to show that $\cf(\gamma_1, t) \subset \mathcal{I}(\cf(\gamma_2, t))$ for $t \in (0, T)_{\R}$. Suppose not. Then, $\cf(\gamma_1, t) \subset \mathcal{O}(\cf(\gamma_2, t))$ for $t \in (0, T)_{\R}$. Let 
    $\{\zeta_k \}$ be a sequence of smooth Jordan curves  such that  $ \zeta_k \subset \mathcal{I}(\gamma_1) $ and $\zeta_k \to \gamma_1$ in Fr\'echet distance. 
    Then  $\zeta_k \subset \mathcal{I}(\gamma_2)$ for all $k \in \{1, ..., \infty\}$.  
    Let $\varphi_k: \mathbf{S}^1 \to \zeta_k$ and $\psi: \mathbf{S}^1 \to \gamma_1$  be two parameterizations such that $\sup_{x \in \mathbf{S}^1} || \varphi_k(x) - \psi(x) || \to 0$.     
If we fix a point $x_0 \in \mathbf{S}^1$, then    $|| \varphi_k(x_0) - \psi(x_0)||    \to 0$, i.e. $\varphi_k(x_0)$ converges to $\psi(x_0) \in \gamma_1$.    
Suppose $\zeta_k \in \mathfrak{J}_{T_k}$. 
Define $\mathrm{g}_k: \varphi_k(x_0) \times [0, T_k) \to \R^2$ by   the trajectory of $\varphi_k(x_0)$ as $\zeta_k$ evolves by curvature flow. 
By Lemma \ref{J haus}, $\cf(\zeta_k, t_{k}) \to \cf(\gamma_1, t_{\infty})$ in Hausdorff distance, so 
for $k$ sufficiently large, $\mathrm{g}_k(\varphi_k(x_0), t_k) \in \cf(\gamma_1, t_{\infty}) \oplus \varepsilon$ for any $\varepsilon >0$.  Since we assumed that  $\cf(\gamma_1, t_{\infty}) \subset \mathcal{O}(\cf(\gamma_2, t_{\infty}))$,  we have $\mathrm{g}_k(\varphi_k(x_0), t_k) \in \mathcal{O}(\cf(\gamma_2, t_{\infty}))$, which implies that 
 $\cf(\zeta_k, t_k) $ crosses  $\cf(\gamma_2, t_k)$ at some point as $t_k \to t_{\infty}$. However, since  each $\zeta_k$'s was initially disjoint from  $\gamma_2$,  their intersection contradicts  the avoidance principle.
\end{proof}

\begin{lemma}\label{Existence}
Let $\gamma \in \mathfrak{J}_T$, and let $x \in \gamma$ be a point. Then, there is a curve $\eta \in \mathfrak{J}$ such that it does not shrink to a point before  $\gamma$ does, only intersects $\gamma$ at two points, which are $x$ and some other point, and  the number of intersection points of $\cf(\gamma, t)$ and $\cf(\eta, t)$ remains 2  for any $t \in (0, T)_{\R}$. Furthermore, $\eta$ is continuous in Fr\'echet distance as a function of $x$. 
\end{lemma}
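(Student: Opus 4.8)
The plan is to construct $\eta$ by starting from a small circle $C_r(x)$ of radius $r$ centered at a point near $x$, and deforming it through a homotopy until it becomes a large circle that encloses $\gamma$ entirely; then I want to argue that, as we sweep through this family, the curves fall (after an instant of evolution) into three classes according to how $\cf(\eta,t)$ sits relative to $\cf(\gamma,t)$: those that immediately become disjoint from $\gamma$ with $\cf(\eta,t)\subset\mathcal I(\cf(\gamma,t))$, those that immediately become disjoint with $\cf(\eta,t)\subset\mathcal O(\cf(\gamma,t))$, and an intermediate class for which $\cf(\eta,t)$ and $\cf(\gamma,t)$ intersect. Using Lemma~\ref{common arc} as the model for the two extreme classes (a curve sharing only an arc with $\gamma$, lying inside, evolves to lie strictly inside, and symmetrically outside), and Angenent's theorem together with Lemma~\ref{p_T 2} to control the intermediate regime, I would show the intermediate class is nonempty and contains a curve meeting $\gamma$ at exactly two points, one of which is $x$, whose two intersection points persist for all $t\in(0,T)_{\R}$.

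More concretely, the first step is to fix a short arc $\mathcal A\subset\gamma$ through $x$ and build a one-parameter family $\eta_s$, $s\in[0,1]_{\R}$, of Jordan curves each of which shares exactly the arc $\mathcal A$ with $\gamma$ near $x$ but otherwise bulges to one side or the other: at $s=0$ the bulge lies in $\mathcal I(\gamma)$, at $s=1$ it lies in $\mathcal O(\gamma)$, and $\eta_s$ varies continuously in Fr\'echet distance. By Lemma~\ref{common arc}, $\cf(\eta_0,t)\subset\mathcal I(\cf(\gamma,t))$ and $\cf(\eta_1,t)\subset\mathcal O(\cf(\gamma,t))$ for all small $t>0$, in particular the two are disjoint. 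The second step is a continuity/connectedness argument: the set of $s$ for which $\cf(\eta_s,t)$ is eventually contained in $\mathcal I(\cf(\gamma,t))$ is open (by Lemma~\ref{J haus} and the fact that disjointness is an open condition under Hausdorff convergence), likewise the set where it is eventually in $\mathcal O(\cf(\gamma,t))$; since $[0,1]_{\R}$ is connected, there is an intermediate $s^\ast$ at which neither holds, so $\cf(\eta_{s^\ast},t)$ meets $\cf(\gamma,t)$ for arbitrarily small $t>0$, and hence (by the non-increasing count and discreteness of tangency times from Angenent) for all $t\in(0,T)_{\R}$, with a transversal intersection. The third step is to refine the construction of the family so that at $s^\ast$ the intersection is exactly two transversal points, one of which is $x$: I would arrange each $\eta_s$ to pass through $x$ and to be convex-like near $x$ so that Angenent's two-point persistence applies, and to control the total number of crossings by making the family monotone (nested outside $\mathcal A$), so that for the critical parameter the crossing set has cardinality exactly two. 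Finally, continuity of $\eta$ (equivalently $\eta_{s^\ast}$) in $x$ follows from making the whole construction depend continuously on $x$ — the arc $\mathcal A=\mathcal A(x)$, the family $\eta_s(x)$, and then $s^\ast(x)$ via Lemma~\ref{p_T 2}, which guarantees the intersection trajectory, and hence the selection of the critical parameter, varies continuously.

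The main obstacle I anticipate is making the trichotomy genuinely exhaustive and the intermediate class well-behaved: a priori, for a given $s$ the behavior of $\cf(\eta_s,t)$ could oscillate — disjoint at some times, intersecting at others — so "eventually inside" and "eventually outside" might not partition $[0,1]_{\R}$ minus a single transitional parameter, and even at a critical $s^\ast$ one must rule out tangential contact and bound the number of transversal crossings by two rather than some larger even number. Overcoming this requires a careful choice of the homotopy (strict nesting of the bulges away from $\mathcal A$, a uniform convexity/transversality condition near $x$) so that the avoidance principle forces monotone behavior in $s$, Angenent's theorem forces monotone (non-increasing) behavior in $t$ with only discretely many tangency times, and Lemma~\ref{common arc} pins down the two endpoints; the intermediate curve then inherits exactly two persistent intersection points. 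Establishing that the crossing count is exactly $2$ and not merely finite and even — presumably by starting the family from a sufficiently small near-circle through $x$, for which any two-point-intersecting competitor has at most two crossings with $\gamma$ — is where the bulk of the technical work lies.
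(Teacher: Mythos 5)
Your overall strategy --- sweep a one-parameter family of curves from ``inside'' to ``outside'' $\gamma$, classify each parameter by whether the evolved curve lies in $\mathcal{I}(\cf(\gamma,t))$, in $\mathcal{O}(\cf(\gamma,t))$, or meets $\cf(\gamma,t)$, and extract an intermediate parameter --- is the same as the paper's. But your concrete construction has a gap that breaks the decisive step. You require every $\eta_s$ to share the arc $\mathcal{A}$ with $\gamma$. Then no curve in the family intersects $\gamma$ at exactly two points (the intersection contains a whole arc), so Angenent's non-increasing intersection count only gives ``finite after an instant,'' not ``at most $2$''; moreover, a continuous family whose non-arc part passes from $\mathcal{I}(\gamma)$ to $\mathcal{O}(\gamma)$ must at intermediate $s$ cross $\gamma$ at additional points, so ``otherwise bulges to one side or the other'' cannot describe the whole family. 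The paper's construction is arranged precisely to avoid this: the homotopy interpolates between parameterizations of the two arcs of $\gamma$ cut off by $x$ and a second marked point, so every intermediate curve meets $\gamma$ at exactly those two points, and only the two endpoint curves of the family share arcs with $\gamma$ (those are handled by Lemma~\ref{common arc} and serve only to anchor the two extreme classes). This is what makes the upper bound of $2$ automatic for the selected curve.

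Two further issues you flag but do not resolve are handled differently in the paper. To rule out that the critical parameter yields only a tangential single-point contact, the paper shows the intersecting class is a closed interval $[\nu_1(x),\nu_2(x)]$ whose endpoint curves are exactly the tangent ones, and then selects the midpoint $\nu_0(x)$; the midpoint curve is nested strictly between the two tangent curves, hence meets $\cf(\gamma,t_\infty)$ in at least two points, and Angenent caps this at two. A single $s^\ast$ chosen by connectedness, as you propose, could land on a tangency. You also never verify the lifetime condition: a family starting at a small circle near $x$ contains curves that vanish long before time $T$; the paper appends to every curve in the family a fixed detour out to a large circle $\mathbf{E}$ so that a disk of radius $\sqrt{2T}$ fits inside each $\eta(x,s)$, which by the avoidance principle forces lifetime greater than $T$. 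Finally, continuity of the selected parameter in $x$ is proved in the paper directly from Hausdorff continuity (Lemma~\ref{J haus}); invoking Lemma~\ref{p_T 2} for this purpose is circular, since its hypothesis (exactly two intersection points) is only available after the selection has been made.
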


\begin{proof}
 Suppose the diameter of $\gamma$ is $2r$, for some $r > 0$.  Let $\mathbf{D} \in \R^2$ be a closed circular region with  diameter $4r$ such that $\gamma  \in \mathbf{D}$.  Then we can find another closed circular region  $\mathbf{E}$ with  diameter  $ d > 4\sqrt{2T} + 4r $ such that $\mathbf{E}$ and $\mathbf{D}$ have the same center. Let $\mathbf{\Gamma}$ be the closed region enclosed by $\gamma$.  Let $A$  be the closed annulus between two concentric circles with  radii $1$ and $2$ in $\R^2$. Let  $\varphi: A \to \overline{\mathbf{D} \setminus \mathbf{\Gamma}}$ be a  homeomorphism.  For each $x \in \gamma$, we have points $\varphi^{-1}(x), -\varphi^{-1}(x) , 2 \varphi^{-1}(x), -2\varphi^{-1}(x) \in \partial A$; see Figure \ref{4}. 
 Let $\mathbf{U}$ and $\mathbf{V}$ denote two arcs of $\gamma$ divided by $x$ and $\varphi(-\varphi^{-1}(x))$. Then there is  a homotopy $\sigma(\cdot, s) = sf +(1-s)g$ between the path $f: [0,1] \to \mathbf{U}$ and the path $g: [0,1] \to \mathbf{V}$ such that $\sigma(\cdot, 0) = g$ and $\sigma(\cdot, 1) = f$.  Given a point $x \in \gamma$, we denote each path generated by the  homotopy $\sigma$ by $\eta_1(x, s)$ for some $s \in [0, 1]$. Let 
$\eta_2(x) = \varphi([\varphi^{-1}(x), 2\varphi^{-1}(x)]_{\R^2})$, and 
$\eta_3(x) = \varphi([-\varphi^{-1}(x), -2\varphi^{-1}(x)]_{\R^2})$.

Given two distinct points, $v_1, v_2 \in \partial \mathbf{D}$, let $\pi_1: \partial \mathbf{D} \times \partial \mathbf{D} \to \partial \mathbf{E}$ be a map defined by $\pi_1(v_1, v_2) = w$ such that $v_1, v_2, w$ are collinear, and $|| w -  v_1|| < ||w - v_2||$. Likewise, let $\pi_2: \partial \mathbf{D} \times \partial \mathbf{D} \to \partial \mathbf{E}$ be a map defined by $\{v_1, v_2\} \mapsto w$ such that $v_1, v_2, w$ are collinear, and $|| w -  v_1|| >  ||w - v_2||$. Let $\eta_4(x) = [\varphi(2\varphi^{-1}(x)),  \pi_1(\varphi(2\varphi^{-1}(x)),  \varphi(-2\varphi^{-1}(x)))]_{\R^2}$,  $\eta_5(x) =  [\varphi(-2\varphi^{-1}(x)),  \pi_2( \varphi(  2\varphi^{-1}(x)), $ $ \varphi(-2\varphi^{-1}(x)))]_{\R^2}$, and  $\eta_6(x) \subset \partial \mathbf{E}$ be an arc of  $\partial \mathbf{E}$ going from  $\pi_1(\varphi(2  \varphi^{-1}  (x)),  \varphi(-2\varphi^{-1}(x)))$ to  $\pi_2(\varphi(2\varphi^{-1}(x)),  \varphi(-2\varphi^{-1}(x)))$ clockwise.

Now we can construct a Jordan curve, $\eta(x, s)$, consisting of $\eta_1(x, s),  \eta_2(x), \eta_3(x),  \eta_4(x), \eta_5(x)$ and $ \eta_6(x)$. Note that $\eta(x, s)$ only intersects $\gamma$ at two points, $x$ and $\varphi(-\varphi^{-1}(x))$.  Let the closed region enclosed by $\eta(x)$ be $\mathbf{H}$. 
Note that  a circle with initial radius $r_0$ can be seen to evolve by curvature flow to a circle with radius $r(t) = \sqrt{r_0^2 -2t}$, and thus the circle disappears at $t = r_0^2/2$. 
Since $d  > 4\sqrt{2T} + 4r$, the difference between the radii of $\mathbf{E}$ and $\mathbf{D}$ is  greater than $2\sqrt{2T}$. Therefore, a circle with radius $\sqrt{2T}$ can be fitted in the interior of  $\mathbf{H} \setminus \mathbf{D}$, and its lifetime is $T$. By avoidance principle, the lifetime of $\eta(x)$ is greater than $T$.

\begin{figure}
\centering
\includegraphics[width = 13.2cm]{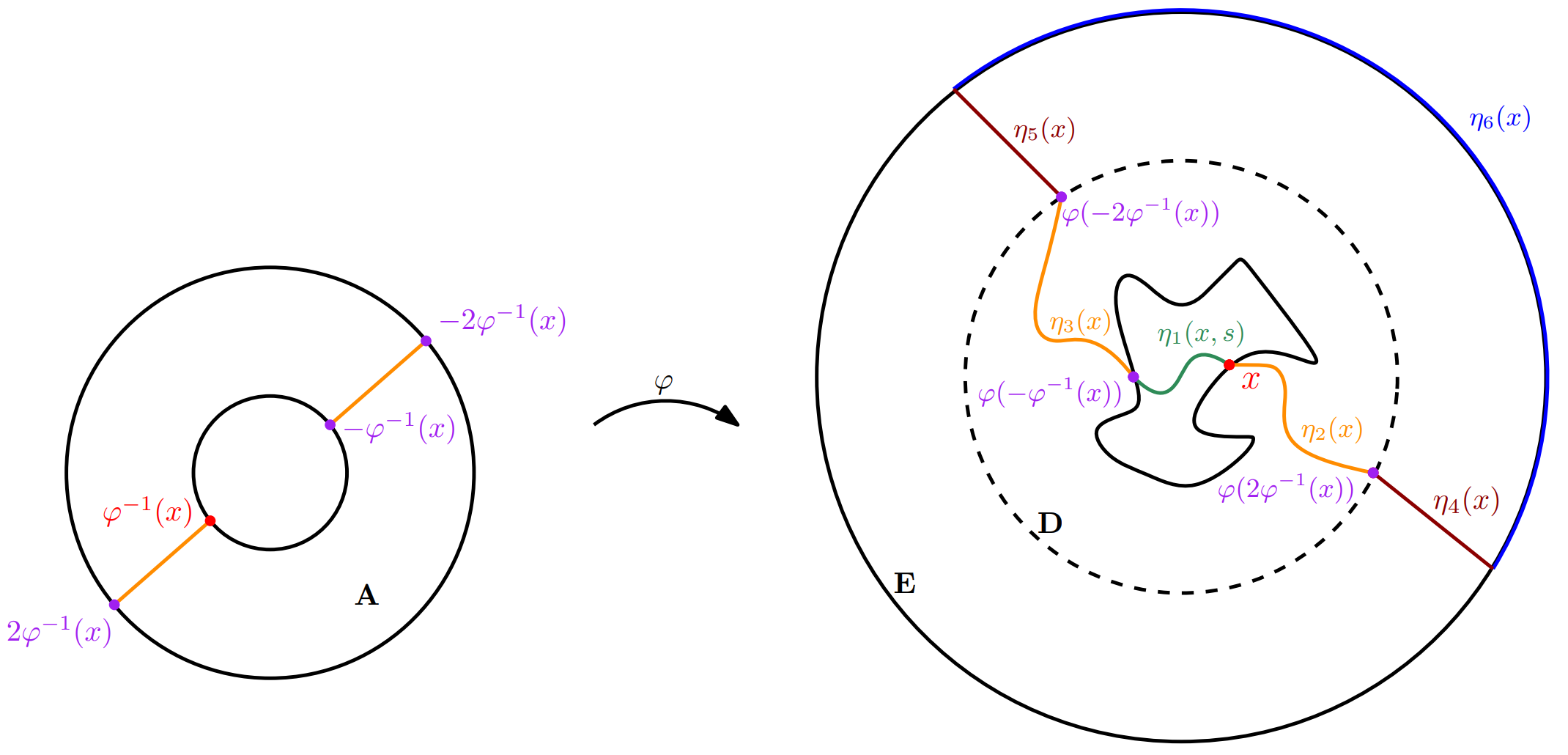}
\caption{We use a homeomorphism $\varphi$ and a homotopy $\sigma$ to define a Jordan curve,  $\eta(x, s)$  consisting of $6$ arcs. }
\label{4}
\end{figure}

Now we want to show that $\eta $ is continuous. Since $\varphi$ is a homeomorphism, as $x_k \to x_{\infty}$, we have $\varphi^{-1}(x_k) \to \varphi^{-1}(x_{\infty})$, $-\varphi^{-1}(x_k) \to -\varphi^{-1}(x_{\infty})$, $2\varphi^{-1}(x_k) \to 2\varphi^{-1}(x_{\infty})$, and $-2\varphi^{-1}(x_k) \to -2\varphi^{-1}(x_{\infty})$, so $[\varphi^{-1}(x_k), 2\varphi^{-1}(x_k)]_{\R^2} \to [\varphi^{-1}(x_{\infty}), 2\varphi^{-1}  (x_{\infty})]_{\R^2}$ and $[-\varphi^{-1}(x_k), -2\varphi^{-1}(x_k)]_{\R^2} \to  [-\varphi^{-1}(x_{\infty}), -2\varphi^{-1}(x_{\infty})]_{\R^2}$
in Fr\'echet distance. By the Heine-Cantor theorem, $\varphi$ is uniformly continuous, so $\eta_i(x_k) \to \eta_i(x_{\infty})$  in Fr\'echet distance for $i \in \{2, 3\}$. Since $\varphi(2\varphi^{-1}(x_k)) \to  \varphi(2\varphi^{-1}(x_{\infty}))   $, $\varphi(-2\varphi^{-1}(x_k)) \to  \varphi(-2\varphi^{-1} (x_{\infty}))$ and how $\pi_1$ and $\pi_2$ are defined, we have 
$\eta_i(x_k) \to \eta_i(x_{\infty})$ in Fr\'echet distance, for $i \in \{4, 5, 6\}$.

 Note that  every pair of $\eta(x, 0)$, $\eta(x, 1)$ and $\gamma$  share a common arc. By Lemma \ref{common arc}, $\cf(\eta(x, 0), t)$  $\cf(\eta(x, 1), t)$ and $\cf(\gamma, t)$ are disjoint from each other for $t \in (0, T)_{\R}$. Without the loss of generality, we assume that $\eta(x, 0) \setminus (\eta(x, 0) \cap \eta(x, 1)) \subset \mathcal{I}(\eta(x, 1))$. Then 
$ \gamma  \setminus (\gamma \cap \eta(x, 0) ) \subset \mathcal{O}(\eta(x, 0))$, and $ \gamma  \setminus (\gamma \cap \eta(x, 1) ) \subset \mathcal{I}(\eta(x, 1))$. By Lemma \ref{common arc}, both $\cf(\gamma, t)$ and $\cf(\eta(x, 0), t)$ are lying in $\mathcal{I}(\cf(\eta(x, 1),  t))$, with $\cf(\gamma, t) \subset \mathcal{O}(\cf(\eta(x, 0), t))$. 
 For each $t_{\infty} \in (0, T)_{\R}$, the family of Jordan curves $ \mathcal{F} = \{\eta(x, s) \ | \ 0 \leq s \leq 1\}$ can be divided into three classes, $\mathcal{C}_1$, $\mathcal{C}_2$, and 
 $\mathcal{C}_3$. Class $\mathcal{C}_1$ contains curves  $\eta(x, s) \in \mathcal{F}$ such that $\cf(\gamma, t_{\infty}) \subset \mathcal{O}(\cf(\eta(x, s), t_{\infty}))$. 
 Class $\mathcal{C}_2$ contains curves  $\eta(x, s) \in \mathcal{F}$ such that $\cf(\gamma, t_{\infty}) \cap \cf(\eta(x, s), t_{\infty}) \neq \emptyset$. Class $\mathcal{C}_3$ contains curves  $\eta(x, s) \in \mathcal{F}$ such that $\cf(\gamma, t_{\infty}) \subset \mathcal{I}(\cf(\eta(x, s), t_{\infty}))$. Since $\eta(x, 0) \in \mathcal{C}_1$ and $\eta(x, 1) \in \mathcal{C}_3$, $\mathcal{C}_1$ and $\mathcal{C}_3$ are both non-empty.  Let $R$ be the radius of the largest  circle that can be inscribed within $\cf(\gamma, t_{\infty})$. By the definition of $\mathcal{C}_1$ and $\mathcal{C}_3$, given two curves $\mu \in \mathcal{C}_1$ and $\mu' \in \mathcal{C}_3$, $\cf(\mu, t_{\infty})$ and $\cf(\mu', t_{\infty})$ are  at least $R$  apart; see Figure \ref{5}. We want to show that class $\mathcal{C}_2$ is also non-empty. Suppose not. 
Let $s_0 = \inf\{ s \ | \ \eta(x, s) \in \mathcal{C}_3, \ 0 \leq s \leq 1 \}$.    

\textbf{Case I.} Suppose $\eta(x, s_0) \in \mathcal{C}_3 $. There is a sequence of curves $\eta(x, s_k) \to \eta(x, s_0)$ in Fr\'echet distance as $s_k \to s_0$ from below such that $\eta(x, s_k)
 \in \mathcal{C}_1$.  Then, $\cf(\eta(x, s_k), t_{\infty})$ and $\cf(\eta(x, s_0), t_{\infty})$  are at least $R$ apart. 
 Since, by Lemma \ref{J haus}, $\cf(\eta(x, s_k), t_{\infty}) \to \cf(\eta(x, s_0), t_{\infty})$ in Hausdorff distance, $\cf(\eta(x, s_k), t_{\infty})$ and $\cf(\eta(x, s_0), t_{\infty})$ are within a Hausdorff distance of $\varepsilon$, for any $\varepsilon > 0$ and all $k$ sufficiently large. For $\varepsilon < R$, we have $\cf(\eta(x, s_k), t_{\infty})$ and $\cf(\eta(x, s_0), t_{\infty})$    less than $R$ apart, and thus there is a contradiction.



\textbf{Case II.}
Suppose $\eta(x, s_0) \in \mathcal{C}_1 $. 
There is a sequence of curves $\zeta(s_k) \to \zeta(s_0)$ in Fr\'echet distance as $s_k \to s_0$ from above such that $\eta(x, s_k
) \in \mathcal{C}_3$. 
Then, $\cf(\eta(x, s_k), t_{\infty})$ and $\cf(\eta(x, s_0), t_{\infty})$  are  at least $R$ apart. 
 Since, by Lemma \ref{J haus}, $\cf(\eta(x, s_k), t_{\infty}) \to \cf(\eta(x, s_0), t_{\infty})$ in Hausdorff distance, $\cf(\eta(x, s_k), t_{\infty})$ and $\cf(\eta(x, s_0), t_{\infty})$ are within a Hausdorff distance of $\varepsilon$, for any $\varepsilon > 0$ and all $k$ sufficiently large. For $\varepsilon < R$, we have $\cf(\eta(x, s_k), t_{\infty})$ and $\cf(\eta(x, s_0), t_{\infty})$    less than $R$ apart, and thus there is a contradiction.


Therefore, class $\mathcal{C}_2$ is also non-empty. Now let us show that, based on the value of $s$, these three classes correspond to three intervals on $[0, 1]_{\R}$; see Figure \ref{5}. We want to show that  for any $a \in [0, 1]_{\R}$, if $\eta(x, a) \in \mathcal{C}_1$,  then $\eta(x, b) \in \mathcal{C}_1$ for any $0 \leq b <  a$. Let $\mathcal{A}$ be the common arc of $\eta(x, a)$ and $\eta(x, b)$. Since $\eta(x, b) \setminus \mathcal{A} \subset \mathcal{I}(\eta(x, a))$, by Lemma \ref{common arc}, $\cf(\eta(x, b) , t_{\infty}) \subset \mathcal{I}( \eta(x, a) , t_{\infty}))$. Since $\eta(x, a) \in \mathcal{C}_1$, $\cf(\gamma, t_{\infty}) \subset \mathcal{O}(\cf(\eta(x, a), t_{\infty}))$. Therefore, $\cf(\gamma, t_{\infty}) \subset \mathcal{O} (\cf(\eta(x, b), t_{\infty}))$, and thus $\eta(x, b) \in \mathcal{C}_1$. Similarly, we can show that 
for any $a \in [0, 1]_{\R}$, if $\eta(x, a) \in \mathcal{C}_3$,  then $\eta(x, b) \in \mathcal{C}_3$ for any $a < b \leq 1$.

Let $\nu_1(x) = \inf\{ s \ | \ \eta(x, s) \in \mathcal{C}_2, 0 \leq s \leq 1\}$, and 
$\nu_2(x) = \sup\{ s \ | \ \eta(x, s) \in \mathcal{C}_2, 0 \leq s \leq 1\}$. We want to show that $\eta(x, \nu_1(x)) \in \mathcal{C}_2$ for any $x \in \gamma$. Suppose not.   Then $  \eta(x, \nu_1(x)) \in \mathcal{C}_1$, and there exists $\varepsilon_1 > 0$ such that $(\cf(\eta(x, \nu_1(x)), t_{\infty})  \oplus \varepsilon_1 ) \cap \cf(\gamma, t_{\infty}) = \emptyset$, for $t_{\infty} \in (0, T)_{\R}$. There is a sequence of curves $\eta(x, s_k) \to \eta(x, \nu_1(x))$ in Fr\'echet distance as $s_k \to \nu_1(x)$ from above such that $\eta(x, s_k) \in \mathcal{C}_2$. Let $t_k \to t_{\infty}$.  By Lemma \ref{J haus}, 
 $\cf(\eta(x, s_k), t_k) \to \cf(\eta(x, \nu_1(x)), t_{\infty})$ in Hausdorff distance, implying that 
$\cf(\eta(x, s_k), t_k) \subset \cf(\eta(x,  \nu_1(x)),  t_{\infty})  \oplus \varepsilon_1 $ for $k$ sufficiently large, and consequently
$\cf(\eta(x, s_k), t_{\infty}) \cap \cf(\gamma, t_{\infty}) = \emptyset$ for $k$ sufficiently large. This 
 contradicts that $\eta(x, s_k) \in \mathcal{C}_2$.

 Furthermore, $\cf(\eta(x, \nu_1(x)), t_{\infty}) $ only intersects $\cf(\gamma, t_{\infty})$ at one point. Suppose not. Then $\cf(\eta(x, \nu_1(x)), t_{\infty}) $ intersects $\cf(\gamma, t_{\infty})$ at two points. 
 There exists $\varepsilon_2 > 0$ such that the boundary of $(\cf(\eta(x, \nu_1(x)), t_{\infty})  \oplus \varepsilon_2 ) $ consists of two Jordan curves that both intersect $\cf(\gamma, t_{\infty})$ at two points. There is a sequence of curves $\eta(x, s_k) \to \eta(x, \nu_1(x))$ in Fr\'echet distance as $s_k \to \nu_1(x)$ from below such that $\eta(x, s_k) \in \mathcal{C}_1$. Let $t_k \to t_{\infty}$.
 By Lemma \ref{J haus}, 
 $\cf(\eta(x, s_k), t_k) \to \cf(\eta(x, \nu_1(x)), t_{\infty})$ in Hausdorff distance, implying that 
$\cf(\eta(x, s_k), t_k) \subset \cf(\eta(x,  \nu_1(x)),  t_{\infty})  \oplus \varepsilon_2 $ for $k$ sufficiently large, and consequently
$\cf(\eta(x, s_k), t_{\infty}) \cap \cf(\gamma, t_{\infty}) \neq \emptyset$ for $k$ sufficiently large. This 
 contradicts that $\eta(x, s_k) \in \mathcal{C}_1$.
 We can use analogous arguments to show that  $\eta(x, \nu_2(x)) \in \mathcal{C}_2$ and $\cf(\eta(x, \nu_2(x)), t_{\infty})$ only intersects $\cf(\gamma, t_{\infty})$ at one point.

 Let $\nu_0(x) = (\nu_1(x)+ \nu_2(x))/2$. 
 Note that $\cf(\eta(x, \nu_0(x)), t_{\infty})  \subset  \mathcal{O}(\cf(\eta(x, \nu_1(x)),  
 t_{\infty}))$ and $\cf(\eta(x, \nu_0(x)), t_{\infty}) \subset  \mathcal{I}(\cf(\eta(x,   \nu_2(x)), t_{\infty}))$. 
  Since $\cf(\eta(x, \nu_1(x)), t_{\infty})$ and $\cf(\eta(x,     \nu_2(x)), t_{\infty})$ are both tangent to $\cf(\gamma, t_{\infty})$ with $\cf(\eta(x, \nu_1(x)), t_{\infty}) \subset \mathcal{I}(\cf(\eta(x,   \nu_2(x)),   t_{\infty}))$, 
 $\cf(\eta(x, \nu_0(x)), t_{\infty}) $ intersects $  \cf(\gamma, t_{\infty}) $ at at least $2$ points.  Since Angenent proved that the point of intersection does not increase throughout the evolution by curvature flow \cite{Sigurd},  and $\eta(x, \nu_0(x))$ initially intersects $\gamma$ at two points, $\cf(\eta(x, \nu_0(x)), t_{\infty})$ must intersect $\cf(\gamma, t_{\infty})$ at two points. 

\begin{figure}
\centering
\includegraphics[width = 13cm]{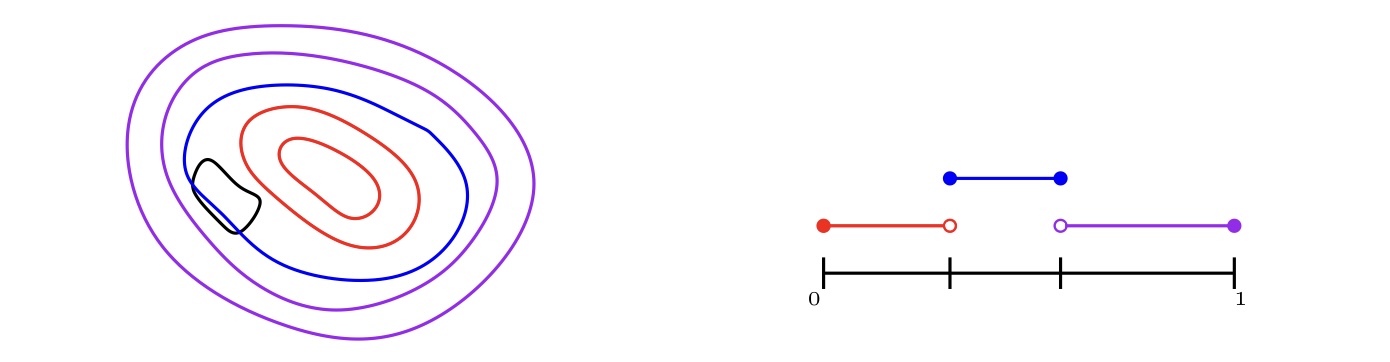}
\caption{On the left, the black curve represents  $\cf(\gamma, t)$, and  the red, blue, purple curves are  curves in $\mathcal{C}_1, \mathcal{C}_2, \mathcal{C}_3$, evolving by curvature flow  at time $t \in (0, T)_{\R}$. On the right are three intervals corresponding to  $\mathcal{C}_1, \mathcal{C}_2, \mathcal{C}_3$. The interval corresponding to $\mathcal{C}_2$  is closed.  }
\label{5}
\end{figure}

Now we want to show that $\nu_1$ is continuous. Suppose not. Let $x_{\infty} \in \gamma$.  Then there exists a sequence $x_k \to x_{\infty}$ such that 
 $\nu_1(x_k)$ is bounded away $  \nu_1(x_{\infty}) $ by some $ \varepsilon > 0$. 
By the compactness of the unit interval, we can restrict $\{\nu_1(x_k)\}$ to a subsequence $\{\nu_1(x_i)\}$ that converges to $p$.

\par

\textbf{Case I.} 
Assume that $p < \nu_1(x_{\infty})$. Then there is $s_0 \in [0, 1]_{\R}$ such that $p < s_0 < \nu_1(x_{\infty})$ and $\eta(x_i, s_0) \in \mathcal{C}_2$ for $i$ sufficiently large.  Let $t_{\infty} \in (0, T)_{\R}$. 
Since $s_0 < \nu_1(x_{\infty})$, $\eta(x_{\infty}, s_0) \in \mathcal{C}_1$, and then $\cf(\eta(x_{\infty}, s_0), t_{\infty}) \cap \cf(\gamma, t_{\infty}) = \emptyset$. Therefore, there exists $\delta_1 >0$ such that $(\cf(\eta(x_{\infty}, s_0), t_{\infty})  \oplus \delta_1 ) \cap \cf(\gamma, t_{\infty}) = \emptyset$. Let $t_i \to t_{\infty}$. 
Since $x_i \to x_{\infty}$, $\eta(x_i, s_0) \to \eta(x_{\infty}, s_0)$ in Fr\'echet distance. By Lemma \ref{J haus}, $\cf(\eta(x_i, s_0), t_i) \to \cf(\eta(x_{\infty}, s_0), t_{\infty})$ in Hausdorff distance, implying that 
$\cf(\eta(x_i, s_0), t_{i}) \subset \cf(\eta(x_{\infty}, s_0),  t_{\infty})  \oplus \delta_1 $ for $i$ sufficiently large, and consequently 
$\cf(\eta(x_i, s_0), t_{\infty}) \cap \cf(\gamma, t_{\infty}) = \emptyset$.
This 
 contradicts that $\eta(x_i, s_0) \in \mathcal{C}_2$. 

\textbf{Case II} 
Assume that $p > \nu_1(x_{\infty})$. Then there is $s_1 \in [0, 1]_{\R}$ such that $\nu_1(x_{\infty}) < s_1 < \nu_1(x_{i})$ and $\cf(\eta(x_{\infty}, s_1), t_{\infty}) $
intersects $ \cf(\gamma, t_{\infty})$ at two points. 
Therefore, there exists $\delta_2 >0$ such that the boundary of $\cf(\eta(x_{\infty}, s_0), t_{\infty})  \oplus \delta_2  $ consists of two Jordan curves that both intersect $\cf(\gamma, t_{\infty})$ at two points. 
Since $\nu_1(x_i ) \to p$ and 
$s_1 < p$, $\eta(x_i, s_1) \in \mathcal{C}_1$ for $i$ sufficiently large. 
Let $t_i \to t_{\infty}$. Since $x_i \to x_{\infty}$, $\eta(x_i, s_1) \to \eta(x_{\infty}, s_1)$ in Fr\'echet distance. By Lemma \ref{J haus}, $\cf(\eta(x_i, s_1), t_i) \to \cf(\eta(x_{\infty}, s_1), t_{\infty})$ in Hausdorff distance, implying that 
$\cf(\eta(x_i, s_1), t_{i}) \subset \cf(\eta(x_{\infty}, s_1),  t_{\infty})  \oplus \delta_2 $ for $i$ sufficiently large, and consequently 
$\cf(\eta(x_i, s_1), t_{\infty}) $ intersects $ \cf(\gamma, t_{\infty})$ at two points.
This 
 contradicts that $\eta(x_i, s_1) \in \mathcal{C}_1$.

 Therefore, $\nu_1$ is continuous. Similarly, we can show that $\nu_2$ is also continuous. For each $x \in \gamma$, since $\nu_1, \nu_2$  are continuous, and $\gamma$ is compact, by Heine-Cantor theorem, $\nu_1$ and $\nu_2$ are uniformly continuous. Therefore, $\eta(x, \nu_0(x))$ is continuous in Fr\'echet distance. Therefore, $\cf(\eta(x, \nu_0(x)), t_{\infty}) $ is the curve that has the desired properties. 
\end{proof}






\par

\begin{lemma}\label{hit z}
Let $\gamma \in \mathfrak{J}_T$, and $z \in \cf(\gamma, t_{\infty})$ for $t_{\infty} \in (0, T]$. 
Then there is a Jordan curve that intersects $\gamma$ at exactly $2$ points, and evolves to pass through $z$ at time $t_{\infty}$, i.e. $z \in \cf(\eta, t_{\infty})$.

\end{lemma}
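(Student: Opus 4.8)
The plan is to attach to every point $x\in\gamma$ a competitor curve coming from Lemma~\ref{Existence}, and then to show by a degree argument that the time-$t_\infty$ intersection points of these competitors with $\cf(\gamma,t_\infty)$ already fill out all of $\cf(\gamma,t_\infty)$; hence one of them passes through the prescribed $z$.

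First assume $t_\infty\in(0,T)_{\R}$ (the endpoint $t_\infty=T$ is recovered at the end by a limiting argument). For each $x\in\gamma$ let $\eta(x)\in\mathfrak{J}$ be the curve produced by Lemma~\ref{Existence}: it meets $\gamma$ in exactly two points, one of which is $x$; it outlives $\gamma$; the number of intersection points of $\cf(\gamma,t)$ and $\cf(\eta(x),t)$ is $2$ for every $t\in(0,T)_{\R}$; and $x\mapsto\eta(x)$ is continuous in Fr\'echet distance. Feeding the pair $\gamma,\eta(x)$ and the intersection point $x$ into Lemma~\ref{p_T 2} produces a continuous trajectory of that intersection point, and I set
\[
q(x)\ :=\ p\bigl(\gamma,\eta(x),x;\,t_\infty\bigr)\ \in\ \cf(\gamma,t_\infty)\cap\cf(\eta(x),t_\infty).
\]
Since $\gamma$ is fixed, $x\mapsto\eta(x)$ is Fr\'echet-continuous, the marked point depends continuously on $x$, and $p$ is jointly continuous in all of its arguments (Lemma~\ref{p_T 2}), the map $q:\gamma\to\cf(\gamma,t_\infty)$ is continuous. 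If $q$ is onto, the lemma follows: given $z\in\cf(\gamma,t_\infty)$, choose $x$ with $q(x)=z$; then $\eta(x)$ meets $\gamma$ in exactly two points and $z=q(x)\in\cf(\eta(x),t_\infty)$.

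So the whole problem reduces to the surjectivity of $q$, and for that I would show $q$ has nonzero degree as a map between the topological circles $\gamma$ and $\cf(\gamma,t_\infty)$ (a map of nonzero degree is onto). Put $q_s(x)=p(\gamma,\eta(x),x;s)\in\cf(\gamma,s)$ for $s\in(0,t_\infty]$; this is jointly continuous in $(x,s)$ and $q_{t_\infty}=q$. For $s>0$ the curve $\cf(\gamma,s)$ is smooth and embedded and evolves smoothly by curvature flow, so following the flow gives homeomorphisms $\Phi_{s',s}:\cf(\gamma,s)\to\cf(\gamma,s')$, of degree $\pm1$ and depending continuously on $(s,s')$. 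Hence for any $s_0\in(0,t_\infty]$ the homotopy $s\mapsto\Phi_{s_0,s}\circ q_s$, $s\in[s_0,t_\infty]$, gives $\deg q=\pm\deg\bigl(q_{s_0}:\gamma\to\cf(\gamma,s_0)\bigr)$. Now let $s_0\to0^+$. Since $p(\gamma,\eta(x),x;0)=x$, joint continuity of $p$ together with compactness of $\gamma$ force $\sup_{x\in\gamma}\|q_{s_0}(x)-x\|\to0$. Fix $O\in\mathcal{I}(\gamma)$; by the avoidance principle a small circle about $O$ stays inside $\mathcal{I}(\cf(\gamma,s_0))$, so for small $s_0$ we have $O\in\mathcal{I}(\cf(\gamma,s_0))$ with $\dist(O,\cf(\gamma,s_0))$ bounded away from $0$, and $\sup_x\|q_{s_0}(x)-x\|$ below that bound. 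The straight-line homotopy $(x,u)\mapsto(1-u)x+u\,q_{s_0}(x)$ then avoids $O$, so $q_{s_0}$ and the inclusion $\gamma\hookrightarrow\R^2\setminus\{O\}$ have equal winding number about $O$, namely $\pm1$; but that winding number also equals $\deg(q_{s_0})$ times the winding number of $\cf(\gamma,s_0)$ about $O$, again $\pm1$. Hence $\deg(q_{s_0})=\pm1$, so $\deg q=\pm1\neq0$ and $q$ is surjective. For $t_\infty=T$ I would choose $z_k\in\cf(\gamma,T-\tfrac1k)$ with $z_k\to z$ (Lemma~\ref{J haus}), apply the above to get curves $\eta(x_k)$ with $z_k\in\cf(\eta(x_k),T-\tfrac1k)$, pass to a subsequence with $x_k\to x_\infty\in\gamma$, and conclude $z\in\cf(\eta(x_\infty),T)$ by Fr\'echet-continuity of $\eta(\cdot)$ and Lemma~\ref{J haus}.

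The main obstacle is precisely the surjectivity of $q$, i.e.\ showing the intersection-point trajectory winds once around $\cf(\gamma,t_\infty)$. The two delicate points there are (i) that the degree computation has a moving target circle as $s$ varies, handled by the flow homeomorphisms, which exist only for positive time, and (ii) upgrading the pointwise identity $q_0=\mathrm{id}_\gamma$ to uniform closeness of $q_{s_0}$ to the identity for small $s_0$ — this is where joint continuity of $p$ (Lemma~\ref{p_T 2}) and compactness of $\gamma$ enter.
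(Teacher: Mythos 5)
Your proposal follows essentially the same route as the paper: attach the curve $\eta(x)$ from Lemma~\ref{Existence} to each $x\in\gamma$, track the intersection point via the trajectory of Lemma~\ref{p_T 2}, and conclude that the map $x\mapsto p(\gamma,\eta(x),x;t_\infty)$ winds once around $\cf(\gamma,t_\infty)$ and is therefore surjective, hitting $z$. Your write-up merely makes the paper's one-line homotopy/winding claim precise (via the flow homeomorphisms, the winding number about an interior point, and the uniform closeness of $q_{s_0}$ to the identity) and adds an explicit limiting argument for the endpoint $t_\infty=T$, which the paper leaves implicit; the argument is correct.
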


\begin{proof}

For any $x \in \gamma$, let  $\eta(x) = \eta(x, \nu_0(x))$ be the Jordan curved defined in Lemma \ref{Existence}. Let $p(\gamma, \eta, x, t) \in \cf(\gamma, t) \cap \cf(\eta(x), t)$ be the trajectory starting from $p(\gamma, \eta, x, 0) = x$ as in Lemma \ref{p_T 2}. Since $\eta$ is continuous in Fr\'echet distance, by Lemma \ref{p_T 2}, $p$ is continuous. The map $p$ defines a homotopy from the identity map on $\gamma$ to the map $p(\gamma, \eta, \cdot, t_{\infty}): \gamma \to \cf(\gamma, t_{\infty})$, so $p(\gamma, \eta, \cdot, t_{\infty})$ winds once around 
$\cf(\gamma, t_{\infty})$, which implies there is some $x \in \gamma$ such that $p(\gamma, \eta, x, t_{\infty}) = z$. Therefore, $ \eta(x)$ has the desired property. 
\end{proof}

\begin{proof}[Proof of Theorem 1.1] This proof is analogous to  Michael Dobbins's proof of Theorem 1.1 in \cite{Dobbins}.  For the reader's convenience, the proof is shown here.

\par

Let $\gamma_k$ satisfy the hypothesis of the theorem, and assume
for the sake of contradiction that $\cf(\gamma_k, t_k) \not \to \cf(\gamma_{\infty}, t_{\infty}) $ in Fr\'echet distance. By Theorem 1.2 (1) and Corollary 1.6
of Lauer \cite{Lauer}, $\cf(\gamma_k, t_k)$ is smooth and has length bounded by some constant for all $k$ sufficiently
large, provided that $t_{\infty}> 0$. Therefore, the sequence of constant speed
parameterizations $\psi_k : \sph^1
 \to \cf(\gamma_k, t_k)$ is uniformly equicontinuous, so by the Arzel\`a-Ascoli
theorem, we can restrict to a sequence that converges uniformly to a map $\psi_{\infty}$. Moreover, by
Lemma \ref{J haus}, the range of the limit $\psi_{\infty}$ is $\cf(\gamma_{\infty}, t_{\infty})$. We can also let $\omega : \mathbf{S}^1
\to \cf(\gamma_{\infty}, t_{\infty})$ be
a constant speed parameterization. Let $\varphi : [0, 2\pi]_{\R} \to \R$ by $\varphi(\theta) = -i log(\omega^{-1} \circ \varphi_{\infty}(e^{i\theta}))$, which is just $\omega^{-1} \circ \varphi_{\infty}$ lifted by the standard parameterization of the circle by angle.
Since the map $(\theta \mapsto \omega^{-1}
\circ \varphi_{\infty}(e^{i \theta}))$ is periodic, $\varphi(2\pi) - \varphi(0)$ is a multiple of $2\pi$. We may
choose the direction of $\omega$ so that $\varphi(2\pi) -\varphi(0) \geq 0$. If we had $\varphi(2\pi) - \varphi(0) = 0$, then the
region on one side of $\cf(\gamma_k, t_k) $ would converge to a subset of $\cf(\gamma_{\infty}, t_{\infty})$, but $\gamma_k$ continues
to be a Jordan curve as it evolves, so that cannot happen. Hence, $\varphi(2\pi) - \varphi(0) \neq  0$. If we had
$\varphi(2\pi) - \varphi(0) > 2\pi$, then $\cf(\gamma_k, t_k)$ would wind more than once around a tubular neighborhood
of $\cf(\gamma_{\infty}, t_{\infty})$, which is impossible for a simple closed curve, so $\varphi(2\pi) - \varphi(0) = 2\pi$.

\par

If $\varphi$ were weakly increasing, then $\varphi$ would be the limit of some sequence of strictly increasing functions $\varphi_k \to \varphi$, and we would have homeomorphisms $\tau_k : \mathbf{S}^1 \to \cf(\gamma_{\infty}, t_{\infty})$ given by
$\tau_k(x) = \omega(e^{
i\varphi_k(-i log(x))})) $ that converge to $\psi_{\infty}$, but then the Fr\'echet distance between $\cf(\gamma_k, t_k)$
and $\cf(\gamma_{\infty}, t_{\infty})$ would be bounded by $\sup_{x \in \mathbf{S}^1} || \psi_k(x) - \tau_k(x)|| \to 0$, which contradicts our
assumption that $\cf(\gamma_k, t_k) \not \to \cf(\gamma_{\infty}, t_{\infty})$ in Fr\'echet distance. Hence, $\varphi$ must decrease somewhere, i.e. there is $w < y$ such that $\varphi(y) < \varphi(w)$, and we may choose the period of $\varphi$ so that
$\varphi(w)$ is between $\varphi(0)$ and $\varphi(2\pi) = \varphi(0) +  2\pi$, and may choose $w, y$ arbitrarily close together.
Then, there are $v < w < x < y < z$ where $\varphi(v) = \varphi(y) < \varphi(x) < \varphi(w) = \varphi(z)$.

\par

Let $v_k = \psi_k(e^{iv}), w_k = \psi_k(e^{iw}), x_k = \psi_k(e^{ix}), y_k = \psi_k(e^{iy}), z_k = \psi_k(e^{iz}).$ The situation
so far is that as we traverse $cf(\gamma_k, t_k)$, we pass through $v_k, w_k, x_k, y_k, z_k$ in that order, and
these points respectively converge to $y_{\infty}, w_{\infty}, x_{\infty}, y_{\infty}, w_{\infty}$; see Figure \ref{6}. 
By Lemma \ref{hit z},
there is a Jordan curve $\eta$ that intersects $\gamma$ at exactly $2$ points and such that $\cf(\eta, t_{\infty})$ intersects
$\cf(\gamma_{\infty}, t_{\infty})$ at $x_{\infty}$. Also, we can choose $w, y$ sufficiently close together so that the $2$ points
$\cf(\eta, t_{\infty}) \cap \cf(\gamma_{\infty}, t_{\infty})$ are not on the same arc from $w_{\infty}$ to $y_{\infty}$. That is, $w_{\infty}$ and $y_{\infty}$ are on
opposite sides of $\cf(\eta, t_{\infty})$.

\par

Since level-set flow gives a solution to the curvature flow problem, we can find a nested
sequence of smooth annuli $\{A_i(0)\}_{i=1}^{\infty}$ such that 
$A_1(0) \supset A_2(0) \supset \cdot \cdot \cdot$ such that $\cf(\eta, t) = \bigcap_{i=1}^{\infty} Ai(t)$ where the
boundary of $A_i(t)$ evolves by curvature flow starting $A_i(0)$.
Choose one of the
annuli $A = A_i$ such that $A(t)$ remains an annulus up to time $s \in (t_{\infty}, T)$ and close enough
to $\eta$ that $A(t_{\infty})$ does not contain $w_{\infty}$ or $y_{\infty}$; see Figure \ref{6}. Let $\alpha$ and $\beta$ be the curves on
the boundary of $A(0)$. Note that $\cf(\alpha, t) $ and $\cf(\beta, t)$ are on either side of $\cf(\eta, t)$ for all
$t \in  [0, s]_{\R}$. Hence, each arc of $\gamma_{\infty}$ subdivided by $\alpha \cup \beta$ that goes from a point on $\alpha$ to a
point on $\beta$ must cross $\eta$, and since $\gamma_{\infty}$ meets $\eta$ at only $2$ points, there can be at most $2$ such
arcs.

\par

Since $\alpha, \eta, \beta$ are compact and pairwise disjoint, we may choose $\varepsilon_1 > 0$ small enough that
$\alpha \oplus \varepsilon_1$, $\zeta \oplus \varepsilon_1$, $\beta \oplus \varepsilon_1$ are pairwise disjoint. Let us choose $k$ sufficiently large that $\gamma_k$ is at
most Fr\'echet distance $\varepsilon_1$ from $\gamma_{\infty}$, and $v_k$, $w_k$, $y_k$, and $z_k$ are each outside of $A(t_k)$. To see
that the later condition can be satisfied, recall that these points each approach either $w_{\infty}$ or
$y_{\infty}$, which are bounded away from $A(t_{\infty})$, and $A(t_k) \to A(t_{\infty})$ in Hausdorff distance.

\par

Consider an arc $\xi$ of $\gamma_k \cap A(0)$ from $\alpha$ to $\beta$. Since $\gamma_k$ is within Fr\'echet distance $\varepsilon_1$ from $\gamma_{\infty}$
there is some map $\psi : \xi \to \gamma_{\infty}$ such that $||\psi(x) - x_k|| \leq  \varepsilon_1$ for all $x \in \xi$, so $\psi(\xi)$ must be an
arc of $\gamma_{\infty}$ from $\alpha \oplus \varepsilon_1$ to $\beta \oplus \varepsilon_1$. Hence, $\psi(\xi)$ must intersect $\zeta$, and since $\gamma_{\infty}$ only intersects
$\zeta$ at $2$ points, there are only $2$ arcs of $\gamma_k \cap A(0)$ from one boundary of $A(0)$ to the other.
Therefore, we can find a curve $\eta$ of area $0$ that winds once around the interior of $A(0)$ and
intersects $\gamma_k$ at only $2$ points; see Figure \ref{6}.

\begin{figure}
\centering
\includegraphics[width = 13cm]{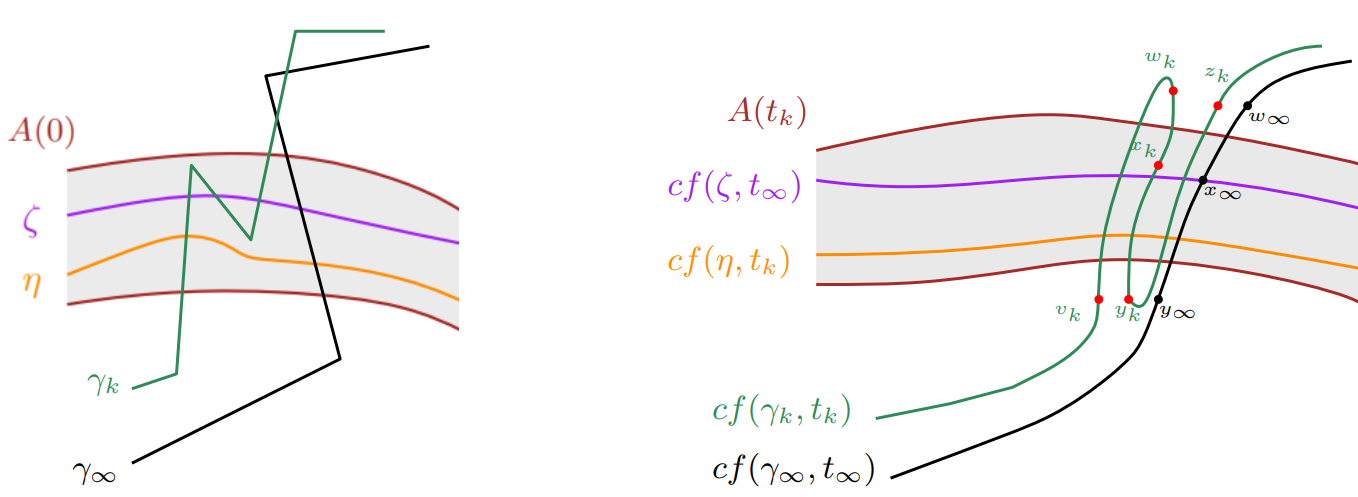}
\caption{ Assuming $\cf(\gamma_k, t_k) \to \cf(\gamma_{\infty}, t_{\infty})$ in Hausdorff distance but not
in Fr\'echet distance, $\gamma_k$ evolves to pass close by a point $x_{\infty}$three times, and
the curve $\zeta$ only intersects $\gamma_{\infty}$ twice and evolves to pass through $x_{\infty}$. The
curve $\eta$ crosses $\gamma_k$ only twice, but evolves to cross $\cf(\gamma_k, t_k)$ more than twice,
contradicting Angenent’s theorem.
  }
\label{6}
\end{figure}

By Angenent’s theorem, $\cf(\eta, t)$ remains disjoint from $\cf(\alpha, t)$ and $\cf(\beta, t)$, and crosses $\cf(\gamma_k, t)$
at most twice, for as long as a solution to curvature flow exists \cite{Sigurd}, and by Lauer’s theorem,
the evolving curves $\cf(\alpha, t), \cf(\beta, t)$ on either side of $\eta$ ensures that a unique solution exists
for $\cf(\eta, t)$ up to time  $s$ \cite{Lauer}.

\par

Since $\cf(\gamma_k, t_k)$ passes though $v_k, w_k, y_k, z_k$ in that order, and $v_k$ and $y_k$ are on one side of
$A(t_k)$, and $w_k$ and $z_k$ are on the other side of $A(t_k)$, $\cf(\gamma_k, t_k)$ must cross from one side
of $A(t_k)$ to the other at least $4$ times. Since $\cf(\eta, t)$ never intersects the boundary of $A(t)$,
$\cf(\eta, t_k)$ winds once around $A(t_k)$. Therefore, $\cf(\eta, t_k)$ intersects $\cf(\gamma_k, tk)$ at least at $4$ points,
but $\eta$ intersects $\gamma_k$ at only $2$ points, which contradicts Angenent’s theorem that the number
of intersection points does not increase; see Figure \ref{6}. Thus, our assumption that $\gamma_k \not \to  \gamma_{\infty}$
in Fr\'echet distance cannot hold.  
\end{proof}

\section{Conclusions}
\noindent
We  showed that the evolution of a Jordan curve on the plane by curvature flow depends continuously on the initial curve in Fr\'echet distance. 
The theoretical results we obtain in this work
helps to connect this geometric evolution to the broader framework of functional analysis, providing a deeper understanding of the underlying mathematical structures. The proof of Theorem \ref{1.1} relies on Joseph Lauer's theorem that a nonsmooth curve in $\R^2$  instantly becomes smooth and has length bounded by some constant \cite{Lauer}. Therefore, it remains an open question whether our results can be extended to Jordan curves on other surfaces. Another promising research direction can be the study of the continuity of surfaces in $\R^3$, where the challenge in establishing continuity arises from the potential formation of  singularities, such as the neckpinch singularity.



\begin{thebibliography}{}


     

\bibitem{Sigurd}
{\sc Sigurd Angenent} {\em Parabolic Equations for Curves on Surfaces: Part II. Intersections, Blow-up and Generalized Solutions.} Annals of Mathematics 133, no. 1 (1991): 171–215. 

\bibitem{Dobbins}
{\sc Michael Gene Dobbins.} {\em Continuous Dependence of Curvature Flow on Initial Conditions.} arXiv:2106.08907, 2021.



\bibitem{Grayson}
{ \sc Matthew A. Grayson.} {\em The heat equation shrinks embedded plane curves to round points.} J. Differential Geom. 26(2): 285-314 (1987).




\bibitem{Gage}
{ \sc M. Gage \& R.S. Hamilton} {\em The Heat Equation Shrinking Convex Plane Curves.} J. Differential Geom. 23(1): 69-96 (1986).



\bibitem{Lauer}
{\sc Joseph Lauer.} {\em A New Length Estimate for Curve Shortening Flow and Low Regularity Initial Data.} Geom. Funct. Anal. 23, 1934–1961 (2013). 






\end{thebibliography}
\end{document}